\newtheorem{algorithm}[theorem]{Algorithm}
\title{Complexity Issues in Computing Spectra, Pseudospectra and Resolvents}
\author{Anders C. Hansen \\ DAMTP, Centre for Mathematical Sciences \\ University of Cambridge \\ Wilberforce Rd, Cambridge CB3 0WA \\ United Kingdom\\ \hspace{20pc} \and 
Olavi Nevanlinna \\Department  of Mathematics and Systems Analysis \\ Aalto University\\ FI-00076 Aalto\\ Finland}
\begin{document}
\date{}
\maketitle

\begin{abstract}
We display methods that allow for computations of spectra,
pseudospectra and resolvents of linear operators on Hilbert spaces
and also elements in unital Banach algebras. The paper considers two
different approaches, namely, pseudospectral techniques and polynomial
numerical hull theory. The former is used for Hilbert space operators
whereas the latter can handle the general case of elements in a Banach
algebra. This approach leads to multicentric holomorphic calculus.
We also discuss some new types of pseudospectra and the recently defined
Solvability Complexity Index.   

\end{abstract}

Key words:  spectrum, resolvent,  pseudospectra,  polynomial numerical hull, holomorphic functional calculus, multicentric calculus, solvability complexity index

\section{Introduction}
 
The theme of this article is how to compute and approximate
spectra of arbitrary closed operators on separable Hilbert
spaces and arbitrary elements of a unital Banach algebra. 
This task is a fascinating
mathematical problem, but it is strongly motivated by
applications. The reason is as follows. After the triumph of quantum
mechanics, operator and spectral theory became indispensable
mathematical disciplines in order to support quantum theory and also to
secure its mathematical foundations. The fundamental paper of Murray and von Neumann \cite{Von_Neumann1}
(which was also very much motivated by quantum mechanics) on operator algebras showed how 
important abstractions of these issues were. In particular, one could then ask spectral questions not only 
about operators but also about elements in a unital $C^*$-algebra (or
a von Neumann algebra), or more generally, a unital Banach algebra. 
There is a vast literature on how
to analyze spectra of linear operators and the field is still very
much active. 

So far, so good, the only problem is that the theoretical
 physicist may not only want theorems about structural properties of
 spectra, one may actually want to determine the spectra
 completely. When faced with this problem the mathematician may first
 recall that even if the dimension of the Hilbert space is finite, this
 is not trivial. One quickly realizes that, due to Abel's contribution
 on the unsolvability of the quintic using radicals, one is doomed to
 fail if one tries to construct the spectrum in terms of finitely many
 arithmetic operations and radicals of the matrix elements of the
 operator. However, in finite dimensions, there is a vast theory on
 how to obtain sequences of sets, whose construction only require
 finitely many arithmetic operations and radicals of the matrix
 elements, 
such that the
 sequence converges to the spectrum of the desired operator. 
Thus, at least in finite dimensions, one can construct the spectrum, 
and this construction automatically yields a
method for approximating the spectrum. Even though this may be
difficult in practice, one has a mathematical theory that guarantees
that up to an arbitrarily small error, one can determine the spectra of
operators on finite dimensional Hilbert spaces.    

There is no automatic extension from the finite dimensional case, 
and the problem is therefore: what can be done in infinite dimensions?
Moreover, how does one handle the case of an unbounded operator? Keeping the
Schr\"{o}dinger and Dirac operators in mind, one realizes that the
unbounded cases may be the most important ones. We must emphasize that
quite a lot is known about how to approximate spectra of
Schr\"{o}dinger and Dirac operators, but, as far as we know, even in
the self-adjoint case, one still only knows how to deal with special
cases, and current methods lack generality.  

Bill Arveson commented in 1994  \cite{Arveson_cnum_lin94} on the 
situation of the computational spectral problem:
``Unfortunately, there is a dearth of literature on this basic
problem, and so far as we have been able to tell, there are no proven
techniques.'' 
We will emphasize that this quote is concerned with the general
problem, and if one has more structure available
e.g. self-adjointness, then much more can be said. However, during the
last two decades the importance of non-normal operators and their
spectra has become increasingly evident. In particular,
the growing
interest in non-Hermitian quantum mechanics
\cite{Bender3,Hatano_96},
non-self-adjoint differential operators \cite{Davies_bull,Zworski} and
in general non-normal phenomena
\cite{Trefethen_paper,Trefethen_Embree} 
has made non-self-adjoint
operators and pseudospectral theory indispensable. This emphasizes the
importance of the general problem and poses a slightly
philosophical problem, namely, could there be operators whose spectra
we can never determine? If such operators are indispensable
in areas of mathematical physics it may lead to serious restrictions to
our possible understanding of some physical systems. 
Fortunately, there have been some recent developments on the topic and due to 
the results in \cite{Hansen2} the future may not be so pessimistic.
For other papers related to the ideas presented in this papers we refer to 
\cite{Arveson_cnum_lin94, Arveson_role_of94,Bedos_folner97, Bottcher_Pseudo, Bottcher_C*, 
Bottcher_99,Lyonell_2, Lyonell1,Marletta2,Brown,Iserles, Silbermann, Hansen3,Levitin}.

\subsection{Background and Notation}
We will in this section review some basic definition and introduce the
notation used in the article. Throughout the paper $\mathcal{H}$ will
always denote a separable Hilbert space, $\mathcal{X}$ and  $\mathcal{Y}$ are 
Banach spaces, $\mathcal{B}(\mathcal{H})$ and $\mathcal{B}(\mathcal{X})$ the
sets of bounded linear operators on $\mathcal{H}$ and $\mathcal{X}$, 
$\mathcal{C}(\mathcal{H})$ and $\mathcal{C}(\mathcal{X})$ the sets
of densely defined closed linear operators on $\mathcal{H}$ and 
$\mathcal{X}$, and 
$\mathcal{SA}(\mathcal{H})$ the set of self-adjoint operators on 
$\mathcal{H}.$ For $T
\in \mathcal{C}(\mathcal{H})$ or $\mathcal{C}(\mathcal{X})$ the domain
of $T$ will be denoted by
$\mathcal{D}(T).$ 
Furthermore, $\mathcal{A}$ denotes a complex unital Banach algebra.  We denote by $a,b,\dots $ generic elements in the algebra and the unit $e \in \mathcal{A}$ satisfies $\|e\| = 1$.
Thus for example, the spectrum of $a\in \mathcal{A}$ is  given by
\begin{equation*}\label{perusmaaritelma}
\sigma(a)= \{ z\in \mathbb C \ : \  z-a   \text { does not have an inverse} \}.
\end{equation*}
If $\mathcal X$ is a Banach space (or Hilbert space as before) then  bounded operators   $B(\mathcal X)$ is an important example of  unital Banach algebras.  The spectrum of an operator $T \in \mathcal{C}(\mathcal{X})$ is defined slightly differently from the Banach algebra case:
\begin{equation*}\label{rajoperspektri}
\sigma(T) =\{ z\in \mathbb C \  : \  z-T   \text { does not have an inverse in } B(\mathcal X)  \}.
\end{equation*}
We will denote orthonormal basis elements of 
$\mathcal{H}$
by $e_j$, and if $\{e_j\}_{j \in \mathbb{N}}$ is a basis 
and $\xi \in \mathcal{H}$
then $\xi_j = \langle \xi, e_j \rangle.$ The word basis will always
refer to an orthonormal basis. If $\mathcal{H}$ is a finite
dimensional Hilbert space with a basis $\{e_j\}$ then
$LT_{\mathrm{pos}}(\mathcal{H})$ will denote the set of lower
triangular matrices (with respect to $\{e_j\}$) 
with positive elements on the diagonal. The closure of a set $\Omega
\in \mathbb{C}$ will be denoted by $\overline{\Omega}$ or
$\mathrm{cl}(\Omega),$  and the interior of $\Omega$ will be denoted by $\Omega^o$.

Convergence of
sets in the complex plane will be quite crucial in our analysis and
hence we need the Hausdorff metric as defined by the following. 
\begin{definition}
For a set $\Sigma \subset \mathbb{C}$ and $\delta > 0$ we
  will let $\omega_{\delta}(\Sigma)$ denote the $\delta$-neighborhood of
  $\Sigma$ (i.e. the union of all $\delta$-balls centered at points of
  $\Sigma).$
Given two compact sets $\Sigma, \Lambda \subset \mathbb{C}$ their
Hausdorff distance is 
\[
d_H(\Sigma,\Lambda) = \max \left\{\sup_{\lambda \in \Sigma}
d(\lambda,\Lambda), \sup_{\lambda \in \Lambda} d(\lambda,\Sigma)\right\}
\]  
where $d(\lambda,\Lambda) = \inf_{\rho \in \Lambda}|\rho-\lambda|.$
If $\{\Lambda_n\}_{n \in \mathbb{N}}$ is a sequence of compact subsets
of $\mathbb{C}$ and $\Lambda \subset \mathbb{C}$ is compact such that
$d_H(\Lambda_n, \Lambda) \rightarrow 0$ as $n \rightarrow \infty$  we
may use the notation
$
\Lambda_n \longrightarrow \Lambda.
$
\end{definition}

When it comes to unbounded subsets of $\mathbb{C}$ one has to be a little careful, as the Hausdorff metric is no longer applicable. Instead one may use the \emph{Attouch-Wets} metric defined by 
\begin{equation*}
	d_{\mathrm{AW}}(\Sigma,\Lambda)=\sum_{i=1}^\infty2^{-i}\min\left\{1,\sup_{|x|<i}\left|d(x,\Sigma)-d(x,\Lambda)\right|\right\},
	\end{equation*}
where $\Sigma$ and $\Lambda$ are closed subsets of $\mathbb{C}$, and where $d(x,\Sigma)$ is as above, which is well-defined even when $\Sigma$ is unbounded. Note that the Attouch-Wets metric becomes, in some sense, a metric that represents {\it locally uniform} convergence of sets. This can easily be seen as follows. Let $\Sigma \subset\mathbb{C}$ and $\Sigma_n\subset\mathbb{C},\ n=1,2,\dots$ be closed and non-empty. Then
	\begin{equation*}
	d_{\mathrm{AW}}(\Sigma_n,\Sigma)\to0\quad\text{if and only if}\quad d_\mathcal{K}(\Sigma_n,\Sigma)\to0\text{ for any compact } \mathcal{K} \subset\mathbb{C},
	\end{equation*}
where
	\begin{equation*}\label{compact_conv}
	d_\mathcal{K}(\Sigma,\Lambda)=\max\left\{\sup_{s\in \Sigma\cap \mathcal{K}}d(s,\Lambda),\sup_{t\in \Lambda \cap \mathcal{K}}d(t,\Sigma)\right\},
	\end{equation*}
	where we use the convention that $\sup_{s\in \Sigma\cap \mathcal{K}}d(s,\Lambda) = 0$ if $ \Sigma\cap \mathcal{K} = \emptyset$. This also makes it clear that if we deal with bounded sets, the two metrics are equivalent.

\subsection{Overview of the paper} 

The discussion in this paper touches two different approaches for computing the spectrum. In {\it pseudospectral techniques} one studies the sets in which the resolvent of the operator $T$,
$$
z\mapsto (z-T)^{-1},
$$
becomes large (or does not  exist).  {\it Polynomial numerical hulls} on the other hand are based on sets
$$
V_p(T) =\{z \in \mathbb C \ : \ |p(z)| \le \|p(T)\| \}
$$
where $p$ is  a monic polynomial, and thus they do not have recourse to any inversion of  operators.

We start with the pseudospectral theory and present some important variants to deal with operators.  Then we define the {\it Solvability Complexity Index} which keeps track on how many  levels an algorithm has limiting processes.  We then discuss compact operators and show that  the spectrum can be computed  using  algorithms with index 1.  The next sections concern the indices of  bounded and unbounded operators respectively.
 
Then we turn to polynomial numerical hull techniques. What is crucial  is that all sets considered are {\it inclusion sets} for the spectrum. For example, intersecting the $V_p(T)$ over all first degree polynomials gives the closure of the numerical range while intersecting over all polynomials equals the polynomially convex hull of the spectrum.
The algorithms have a natural set up within Banach algebras and for example, the spectrum of an algebra element (within the subalgebra it generates) can be computed with just one limiting process {\it provided} that one assumes that the norm of an element is available as a single operation.

It turns out that, as a byproduct, one obtains an explicit representation for the resolvent which then leads to an approach for algorithmic  holomorphic calculus with low complexity.

\section{Pseudospectra and their Close Cousins}\label{morespseudo}

Pseudospectral theory is now a mainstay in spectral analysis and we
refer the reader to \cite{Trefethen_Embree} for a thorough introduction to the
topic. Before we introduce other types of pseudospectra  
we will recall the definition of the pseudospectrum and discuss some
of its quite pleasant properties.

\begin{definition}
Let $T$ be a closed operator on a Hilbert space $\mathcal{H}$ such
that $\sigma(T) \neq \mathbb{C},$ and $\epsilon > 0.$  
The $\epsilon$-pseudospectrum of $T$ is
defined as the set
$$
\sigma_{\epsilon}(T) = \sigma(T) \cup \{z \notin \sigma(T):
\|(z - T)^{-1}\| >
\epsilon^{-1}\}.
$$
\end{definition}  

Note that, for $\epsilon > 0$, the mapping 
$\mathcal{B}(\mathcal{H}) \ni T \mapsto
\overline{\sigma_{\epsilon}(T)} \in \mathcal{F}$, where $\mathcal{F}$ denotes
the collection of compact subsets of $\mathbb{C}$ equipped with the
Hausdorff metric, is a continuous mapping. This is a nice property
that is not shared by the spectrum. In particular, it is well known
that the mapping  
$\mathcal{B}(\mathcal{H}) \ni T \mapsto
\sigma(T) \in \mathcal{F}$ is discontinuous (see \cite{Hansen2}
for examples). The nice property of continuity is one reason why the 
pseudospectrum has become popular and very important in applications
\cite{Hansen3}, although there are many other justifications for the
enthusiasm for the pseudospectum \cite{Trefethen_Embree, Trefethen_Acta}. 
Note, however, 
that the continuity property ends with $\mathcal{B}(\mathcal{H})$. In
particular, as a result of the fundamental paper by Shargorodsky
\cite{Sharg2}, the mapping  
$\mathcal{C}(\mathcal{H}) \ni T \mapsto
\overline{\sigma_{\epsilon}(T)},$ where 
$\mathcal{C}(\mathcal{H})$ is equipped with the graph metric (see
\cite{Hansen2}), may not be continuous. This is also the case in the
bounded example if the Hilbert space is replaced by a Banach space. 

In \cite{Hansen1} and \cite{Hansen2} some new variants of the
pseudospectrum were introduced. We will recall these sets here and
discuss some of their nice properties. 
Before we introduce the different pseudospectra we need to define a
 convenient function. 
 
 \begin{definition}\label{functions}
 Define, for $n \in \mathbb{Z_+}$ the function 
 $\Phi_n: \mathcal{B}(\mathcal{H}) \times \mathbb{C}
\rightarrow \mathbb{R}$ by
$$
\Phi_{n}(S,z) = \min\left\{\lambda^{1/2^{n+1}}: 
\lambda \in  \sigma \left(((S - z)^*)^{2^n} (S -
z)^{2^n}\right)\right\}.
$$     
Define also for $T \in \mathcal{B}(\mathcal{H})$
\begin{equation}\label{gamma}
\gamma_n(z)  = \min[\Phi_n(T,z),\Phi_n(T^*,\bar z)].
\end{equation}
\end{definition}

\subsection{The $(n,\epsilon)$-pseudospectrum}
 
We will now introduce the $(n,\epsilon)$-pseudospectrum, which is an extension of the usual pseudospectrum.  
(This set was actually first introduced in \cite{Hansen1}.)
\begin{definition}
Let $T$ be a closed operator on a Hilbert space $\mathcal{H}$ such
that $\sigma(T) \neq \mathbb{C},$ and 
let $n \in \mathbb{Z}_+$ and $\epsilon > 0.$  
The $(n,\epsilon)$-pseudospectrum of $T$ is
defined as the set
$$
\sigma_{n,\epsilon}(T) = \sigma(T) \cup \{z \notin \sigma(T):
\|(z - T)^{-2^n}\|^{1/2^n} >
\epsilon^{-1}\}.
$$
\end{definition}  
As we will see in the next theorem,
the $(n,\epsilon)$-pseudospectrum has all the nice
continuity properties that the pseudospectrum has, but it also
approximates the spectrum arbitrarily well for large $n.$
\begin{theorem}\label{bounded} \cite{Hansen2} 
Let
$T \in \mathcal{B}(\mathcal{H})$, $\gamma_n$ be defined as in
(\ref{gamma}) and $\epsilon > 0$.
Then the following is true:
\begin{itemize}
\item[(i)]
 $\sigma_{n+1,\epsilon}(T) \subset \sigma_{n,\epsilon}(T).$
\item[(ii)]
$\sigma_{n,\epsilon}(T) = \{z \in \mathbb{C}: \gamma_n(z) <
  \epsilon\}.$ 
\item[(iii)]
$
\mathrm{cl}(\{z : \gamma_n(z) <
  \epsilon\}) = \{z : \gamma_n(z) \leq
  \epsilon\}.
$
\item[(iv)]
Let $\omega_{\epsilon}(\sigma(T))$ denote the $\epsilon$-neighborhood
around 
$\sigma(T).$ Then
$$
d_H\left(\overline{\sigma_{n,\epsilon}(T)},
  \overline{\omega_{\epsilon}(\sigma(T))}\right) \longrightarrow 0, \qquad n
  \rightarrow \infty. 
$$ 
\item[(v)] 
If $\{T_k\} \subset \mathcal{B}(\mathcal{H})$ and $T_k \rightarrow T$
in norm, it follows that 
$$d_H\left(\overline{\sigma_{n,\epsilon}(T_k)},
  \overline{\sigma_{n,\epsilon}(T)}\right) \longrightarrow 0, \qquad k \rightarrow \infty.$$
\end{itemize}
\end{theorem}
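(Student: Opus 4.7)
The strategy centres on the identity
\[
\|(z-T)^{-2^n}\|^{1/2^n} = \gamma_n(z)^{-1} \qquad (z \notin \sigma(T)),
\]
after which (i)--(iii) become analytic statements about the level sets of $\gamma_n$ and (iv)--(v) reduce to limiting arguments that have a familiar flavour from the usual pseudospectrum.

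For (i), submultiplicativity gives $\|A^{2^{n+1}}\| = \|(A^{2^n})^2\| \leq \|A^{2^n}\|^2$, hence $\|A^{2^{n+1}}\|^{1/2^{n+1}} \leq \|A^{2^n}\|^{1/2^n}$; specialising to $A=(z-T)^{-1}$ proves the monotonicity. For (ii), I apply the C*-identity $\|B\|^2=\|B^*B\|$ to $B=(z-T)^{-2^n}$ and use that $((z-T)^*)^{-2^n}(z-T)^{-2^n}$ is self-adjoint so its norm equals the top of its spectrum, whose reciprocal is the bottom of the spectrum of $((z-T)^*)^{2^n}(z-T)^{2^n}$. Because $(z-T)$ is invertible off $\sigma(T)$, the operators $((z-T)^*)^{2^n}(z-T)^{2^n}$ and $(z-T)^{2^n}((z-T)^*)^{2^n}$ are both invertible with identical spectra, so $\Phi_n(T,z)=\Phi_n(T^*,\bar z)=\gamma_n(z)$ there; on $\sigma(T)$ either $(z-T)$ or $(z-T)^*$ fails to be bounded below, so $\gamma_n(z)=0<\epsilon$, and $\sigma(T)$ is automatically inside $\{\gamma_n<\epsilon\}$.

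For (iii), one inclusion is routine from continuity of $\gamma_n$. The reverse inclusion amounts to showing that $\gamma_n$ has no level set with non-empty interior at height $\epsilon$, equivalently that the operator-valued holomorphic function $R(z)^{2^n}=(z-T)^{-2^n}$ does not have norm locally constant on any open piece of the resolvent set. I would deduce this from subharmonicity of $\log\|R(z)^{2^n}\|$ together with the Globevnik--Daniluk--Shargorodsky theorem which excludes exactly such plateaus for norms of resolvent powers. This is the main technical obstacle; everything else is soft once (ii) and this non-plateau result are in hand.

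For (iv), Gelfand's formula along the subsequence $\{2^n\}$ gives, pointwise for $z \notin \sigma(T)$,
\[
\|(z-T)^{-2^n}\|^{1/2^n} \longrightarrow r\bigl((z-T)^{-1}\bigr) = d(z,\sigma(T))^{-1}.
\]
Combined with the universal lower bound $\|A^{2^n}\|^{1/2^n}\geq r(A)$, which yields $\omega_\epsilon(\sigma(T))\subset\sigma_{n,\epsilon}(T)$ for every $n$, and with (i), the sets $\overline{\sigma_{n,\epsilon}(T)}$ decrease to a closed set squeezed between $\overline{\omega_\epsilon(\sigma(T))}$ on one side and (via Gelfand) $\overline{\omega_\epsilon(\sigma(T))}$ on the other. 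A containment of all sets in a fixed large disk (inherited from $\overline{\sigma_{0,\epsilon}(T)}$) then converts this nested convergence into Hausdorff convergence. For (v), the second resolvent identity plus $T_k \to T$ in norm gives $(z-T_k)^{-1}\to(z-T)^{-1}$, hence $(z-T_k)^{-2^n}\to(z-T)^{-2^n}$, uniformly on compact subsets of the resolvent set of $T$; so $\gamma_n^{T_k}\to\gamma_n^T$ locally uniformly, and standard arguments about Hausdorff convergence of sublevel sets of uniformly convergent continuous functions, together with uniform large-disk containment, finish the argument.
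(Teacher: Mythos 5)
Your proposal follows essentially the same route as the paper's own sketch: submultiplicativity of the norm for (i) (the paper writes the same inequality for the reciprocals), the $C^*$-identity together with $\sigma(A^*A)=\sigma(AA^*)$ for invertible $A$ in (ii), Shargorodsky's maximum principle for (iii), the spectral radius formula with monotone convergence for (iv), and reduction of (v) to locally uniform convergence of the functions $\gamma_n$ followed by a sublevel-set convergence lemma (the paper's Proposition \ref{boundary}). Two points deserve sharpening. First, in (iii) the correct reduction is not that the level set $\{\gamma_n=\epsilon\}$ has empty interior, but that $\|(z-T)^{-2^n}\|$ attains no local maximum: if $z_0\notin\mathrm{cl}(\{\gamma_n<\epsilon\})$ while $\gamma_n(z_0)=\epsilon$, then $\|(z-T)^{-2^n}\|\le\epsilon^{-2^n}$ on a neighbourhood of $z_0$ with equality at $z_0$, and it is exactly the non-attainment statement of Theorem \ref{Sharg} (applied to $A(z)=(z-T)^{-2^n}$, whose derivative is invertible) that forbids this; mere emptiness of the interior of the level set would not suffice. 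Since you invoke the right theorem, this is a misstated reduction rather than a fatal gap. Second, in (v) the ``standard argument'' you appeal to is precisely Proposition \ref{boundary}, whose hypothesis is property (iii): locally uniform convergence of continuous functions does \emph{not} by itself yield Hausdorff convergence of the closures of sublevel sets, so (iii) must be cited explicitly at this step. Relatedly, to get $\gamma_n^{T_k}\to\gamma_n^{T}$ locally uniformly you should argue from the Lipschitz dependence of $\Phi_n(\cdot,z)$ on the operator (e.g.\ $|\min\sigma(A)-\min\sigma(B)|\le\|A-B\|$ for self-adjoint $A,B$), since the resolvent-identity route only covers $z$ away from $\sigma(T)$, whereas the sets in question contain the spectrum.
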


Note that the definition of the $(n,\epsilon)$-pseudospectrum carries 
over to the abstract case of a unital 
Banach algebra. In particular, we will have the following definition.
\begin{definition}
Let $a \in \mathcal{A}$ where $ \mathcal{A}$ is a unital Banach
algebra, 
$n \in \mathbb{Z}_+$ and $\epsilon > 0.$  
The $(n,\epsilon)$-pseudospectrum of $a$ is
defined as the set
$$
\sigma_{n,\epsilon}(a) = \sigma(a) \cup \{z \notin \sigma(a):
\|(z-a)^{-2^n}\|^{1/2^n} >
\epsilon^{-1}\}.
$$
\end{definition}  
However, all of the properties (i)-(v) may not carry over in the
abstract case. 
For example, (ii) and (iii) does not 
even make sense, since an involution mapping $a \mapsto a^*$ is needed
to define the function $\gamma$.  Also, property  
(v) may not carry over. In fact, by the fundamental result of
Shargorodsky \cite{Sharg2}, it is known that pseudospectra of Banach space operators
can "jump". More precisely one can have that  
$$
\overline{\sigma_{\epsilon}(a)}  \neq \sigma(a) \cup \{z \notin \sigma(a):
\|(z-a)^{-1}\|  \geq
\epsilon^{-1}\}
$$ 
for $a$ being a bounded operator on a certain Banach space. 
However,  properties (i) and (iv) carry over and this can be seen as
follows. First, we need the following  
proposition from \cite{Hansen2} : 
\begin{proposition}\label{boundary} \cite{Hansen2} 
Let $f : \mathbb{C} \rightarrow [0,\infty)$ be continuous 
and let $\{f_k\}_{k \in \mathbb{N}}$ be a sequence of functions
such that $f_k :
  \mathbb{C} \rightarrow [0,\infty)$ and $f_k \rightarrow f$
  locally uniformly. Suppose that one of the two following properties
  are satisfied.
\begin{itemize}
\item[(i)]
$f_k \rightarrow f$ monotonically from above. 
\item[(ii)]
For $\epsilon > 0$, then 
$
\mathrm{cl}(\{z : f(z) <
  \epsilon\}) = \{z : f(z) \leq
  \epsilon\}.
$
\end{itemize}
Then, it follows that 
$$
d_{\mathrm{AW}}(\mathrm{cl}(\{z : f_k(z) <
  \epsilon\}), \mathrm{cl}(\{z : f(z) <
  \epsilon\})) \longrightarrow 0, \qquad k \rightarrow \infty. 
$$
\end{proposition}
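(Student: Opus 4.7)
The plan is to exploit the equivalence---stated just after the definition of the Attouch--Wets metric in the paper---between $d_{\mathrm{AW}}$-convergence and $d_{\mathcal{K}}$-convergence on every compact set $\mathcal{K}\subset\mathbb{C}$. Write $\Omega=\mathrm{cl}(\{z:f(z)<\epsilon\})$ and $\Omega_k=\mathrm{cl}(\{z:f_k(z)<\epsilon\})$. Fixing an arbitrary compact $\mathcal{K}$, it suffices to establish the two one-sided estimates
\[
\sup_{z\in\Omega_k\cap\mathcal{K}}d(z,\Omega)\longrightarrow 0 \qquad\text{and}\qquad \sup_{z\in\Omega\cap\mathcal{K}}d(z,\Omega_k)\longrightarrow 0.
\]
Note that under (ii) the hypothesis rewrites as $\Omega=\{f\leq\epsilon\}$, while under (i) the monotonicity $f_k\geq f$ yields $\{f_k<\epsilon\}\subset\{f<\epsilon\}$ and hence $\Omega_k\subset\Omega$ for every $k$.

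For the first supremum, case (i) is immediate from $\Omega_k\subset\Omega$. For case (ii) I would argue by contradiction: if the sup does not vanish, extract $\delta>0$, a subsequence $k_j$, and $z_j\in\Omega_{k_j}\cap\mathcal{K}$ with $d(z_j,\Omega)\geq\delta$. Compactness of $\mathcal{K}$ produces $z_j\to z_0\in\mathcal{K}$; by definition of $\Omega_{k_j}$ we may also pick $z_j'$ with $|z_j-z_j'|<1/j$ and $f_{k_j}(z_j')<\epsilon$, and then $z_j'\to z_0$. Combining locally uniform convergence of $f_{k_j}\to f$ on a fixed compact neighbourhood of $z_0$ with the continuity of $f$ yields $f(z_0)=\lim_j f_{k_j}(z_j')\leq\epsilon$, so hypothesis (ii) places $z_0$ in $\{f\leq\epsilon\}=\Omega$, contradicting $d(z_0,\Omega)\geq\delta$.

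For the second supremum I would use a single contradiction argument that covers both cases. Suppose it does not vanish; after passing to a subsequence (still indexed by $k$), obtain $z_k\to z_0\in\Omega\cap\mathcal{K}$ with $d(z_k,\Omega_k)\geq\delta$. Since $z_0\in\Omega=\mathrm{cl}(\{f<\epsilon\})$---an identity that holds by definition in case (i) and by hypothesis in case (ii)---choose a fixed $z'$ with $|z_0-z'|<\delta/3$ and $f(z')<\epsilon$. Locally uniform convergence forces $f_k(z')<\epsilon$ for all sufficiently large $k$, so $z'\in\Omega_k$, and the triangle inequality gives $d(z_k,\Omega_k)\leq|z_k-z'|\leq|z_k-z_0|+|z_0-z'|<\delta$ for such $k$, a contradiction.

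The main conceptual obstacle---and the reason two alternative hypotheses are required at all---is controlling the level set $\{f=\epsilon\}$. Without further structure one can engineer $\mathrm{cl}(\{f<\epsilon\})$ to be strictly smaller than $\{f\leq\epsilon\}$ (e.g.\ if $f$ has an isolated peak just touching level $\epsilon$), in which case $\Omega_k$ may either drift outside $\Omega$ in the limit or fail to approximate certain limiting points of $\Omega$. Hypothesis (i) sidesteps the first possibility by forcing $\Omega_k\subset\Omega$, while hypothesis (ii) rules out the second by guaranteeing that every point of $\Omega$ is approximable from the open sublevel set $\{f<\epsilon\}$, which via locally uniform convergence is eventually contained in $\Omega_k$.
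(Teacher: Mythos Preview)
The paper does not actually prove this proposition; it is quoted from \cite{Hansen2} and immediately applied, with no argument supplied in the present paper. So there is nothing to compare your attempt against here.

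That said, your argument is correct. The reduction to the two one-sided $d_{\mathcal{K}}$-estimates is exactly what the Attouch--Wets characterisation in the paper licenses, and both contradiction arguments go through as written. A couple of very small cosmetic points: in the second supremum you remark that ``$\Omega=\mathrm{cl}(\{f<\epsilon\})$ holds by definition in case (i) and by hypothesis in case (ii)'', but in fact this is simply the definition of $\Omega$ in both cases and no appeal to hypothesis (ii) is needed at that step; and you might mention explicitly that the existence of the point $z'$ with $f(z')<\epsilon$ also guarantees $\Omega_k\neq\emptyset$ for large $k$, so that $d(z_k,\Omega_k)$ is finite along the subsequence. Neither affects the validity of the proof.
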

Note that (i) follows by the definition of $\sigma_{n,\epsilon}(a)$ and the
fact that 
\begin{equation*}
\begin{split}
 \|(z-a)^{-2^{n+1}}\|^{-1/2^{n+1}} &\geq
  \|(z-a)^{-2^n}\|^{-1/2^{n+1}} \|(z-a)^{-2^n}\|^{-1/2^{n+1}} \\&=
 \|(z-a)^{-2^n}\|^{-1/2^n}. 
\end{split}
\end{equation*}
To see that (iv) is also true in the general case define 
$$
f(z) = \mathrm{dist}(z,\sigma(a)), 
$$
and define 
$$
f_n(z) = 
\begin{cases} \|(z-a)^{-2^n}\|^{-1/2^n} & z \notin \sigma(a)
\\
0 & z \in \sigma(a).
\end{cases}
$$ 
After observing that 
$$
f(z) =
 1/\rho((z-a)^{-1}),
$$ 
where $\rho((z-a)^{-1})$ denotes the spectral radius of 
$((z-a)^{-1})$, we can apply the spectral radius formula, a
straightforward argument via Dini's Theorem and deduce that 
$f_n \rightarrow f$ locally uniformly and monotonically from above.
By invoking Proposition \ref{boundary} we are done.

\subsection{The Residual Pseudospectrum}

The disadvantage of the $(n,\epsilon)$-pseudospectrum is that even though one
can estimate the spectrum by taking $n$ very large or $\epsilon$
  very small, $n$ may have to be
too large or $\epsilon$ too small for practical purposes (this
would depend on the computer used). Thus, since we only have the
estimate $\sigma(T) \subset \sigma_{n,\epsilon}(T)$ for 
$T \in \mathcal{B}(\mathcal{H}), \epsilon > 0$,
 it is important to get a
``lower'' bound on $\sigma(T)$ i.e. we want to find a set $\theta
\subset \mathbb{C}$ such that $\theta \subset \sigma(T).$ A candidate
for this is described in the following.

\begin{definition}\label{what}
Let $T \in \mathcal{B}(\mathcal{H})$ and $\Phi_0$ be defined as in
Definition 
\ref{functions}. Let   
$\zeta
_1(z) = \Phi_0(T,z)$ 
$\zeta_2(z) =  \Phi_0(T^*,\bar z).$
Now let $\epsilon > 0$ and 
define
the $\epsilon$-residual pseudospectrum to be the set
$$
\sigma_{\mathrm{res},\epsilon}(T) = \{z: \zeta_1(z) > \epsilon, \,
\zeta_2(z) = 0\}
$$
and the adjoint $\epsilon$-residual pseudospectrum to be the set
$$
\sigma_{\mathrm{res^*},\epsilon}(T) = \{z: \zeta_1(z) = 0,\, 
\zeta_2(z) > \epsilon\}.
$$
\end{definition}

\begin{theorem}\label{subharmonic} \cite{Hansen2} 
Let $T \in \mathcal{B}(\mathcal{H})$ and let
$\{T_k\} \subset \mathcal{B}(\mathcal{H})$ such
that $T_k \rightarrow  T$ in norm, as $k \rightarrow \infty.$ Then for
$\epsilon > 0$ we have the following:
\begin{itemize}
\item[(i)]
$\sigma(T) \supset \bigcup_{\epsilon >
  0} \sigma_{\mathrm{res},\epsilon}(T) \cup
  \sigma_{\mathrm{res^*},\epsilon}(T).$ 
\item[(ii)]
$\mathrm{cl}(\{z \in \mathbb{C}: \zeta_1(z) < \epsilon\}) = \{z \in
  \mathbb{C}: \zeta_1(z) \leq 
  \epsilon\}.$ 
\item[(iii)]
$\mathrm{cl}(\{z \in \mathbb{C}: 
\zeta_2(z) < \epsilon \}) = \{z \in \mathbb{C}: 
\zeta_2(z) \leq \epsilon\}.$
\item[(iv)] 
$$
d_H(\mathrm{cl}(\sigma_{\mathrm{res},\epsilon}(T_k)) , 
\mathrm{cl}(\sigma_{\mathrm{res},\epsilon}(T))) \longrightarrow 0, 
\qquad k \rightarrow \infty. 
$$
\item[(v)]
 $$
d_H(\mathrm{cl}(\sigma_{\mathrm{res}^*,\epsilon}(T_k)) , 
\mathrm{cl}(\sigma_{\mathrm{res}^*,\epsilon}(T))) \longrightarrow 0, 
\qquad k \rightarrow \infty.  
$$
\end{itemize}
\end{theorem}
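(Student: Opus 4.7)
The plan is to prove (i)--(v) in order, exploiting the interpretation of $\zeta_1(z)$ as the lower bound of $T-z$, i.e.\ $\inf_{\|x\|=1}\|(T-z)x\|$, which equals $\mathrm{dist}(0,\sigma(|T-z|))$, and of $\zeta_2(z)$ as the lower bound of $(T-z)^*$. Both functions are $1$-Lipschitz in $z$, and continuous (Lipschitz) in $T$ in the operator norm; these two facts are the workhorses for the whole theorem.

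Part (i) is immediate: $\zeta_1(z) > \epsilon > 0$ forces $T-z$ to be bounded below, hence injective with closed range, while $\zeta_2(z) = 0$ says $(T-z)^*$ is not bounded below, equivalently $T-z$ does not have dense range. Thus $T-z$ is not invertible, so $z \in \sigma(T)$. The adjoint residual case is symmetric.

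For (ii) and (iii), the inclusion $\subseteq$ is immediate from continuity of $\zeta_j$. For $\supseteq$ I must show that $\zeta_j$ has no plateau at level $\epsilon$: every $z_0$ with $\zeta_j(z_0)=\epsilon$ is approached by points with $\zeta_j<\epsilon$. On $\rho(T)$ one has $\zeta_1(z)=\|(T-z)^{-1}\|^{-1}$, and the resolvent norm is subharmonic there; here I would invoke the Daniluk--Globevnik--Shargorodsky theorem, which asserts that the resolvent norm of a Hilbert space operator cannot be locally constant, producing the required sequence. For $z_0\in\sigma(T)$ with $\zeta_1(z_0)>0$, $T-z_0$ is bounded below, so $z_0$ lies in the approximate defect spectrum but not the approximate point spectrum, which puts $z_0$ on $\partial\sigma(T)$; approach from $\rho(T)$ reduces to the previous case. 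The same argument with $T^*$ in place of $T$ establishes (iii).

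For (iv) and (v), set $f(z)=\zeta_1(T,z)$ and $f_k(z)=\zeta_1(T_k,z)$; these are equi-Lipschitz in $z$ and $f_k\to f$ pointwise since $T_k\to T$ in norm, hence $f_k\to f$ locally uniformly by Arzel\`a--Ascoli. Proposition~\ref{boundary}, with hypothesis~(ii) furnished by part~(ii) just proved, then gives Attouch-Wets (and hence Hausdorff, on bounded sets) convergence of $\mathrm{cl}(\{\zeta_1(T_k,\cdot)<\epsilon\})$ to $\mathrm{cl}(\{\zeta_1(T,\cdot)<\epsilon\})$, and an analogous statement for $\zeta_2$. The main obstacle is to couple these two convergences into convergence of $\sigma_{\mathrm{res},\epsilon}(T_k)=\{\zeta_1(T_k,\cdot)>\epsilon\}\cap\{\zeta_2(T_k,\cdot)=0\}$, because the second factor is an \emph{equality} level set, not a strict-inequality set, and so is not directly governed by Proposition~\ref{boundary}. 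I would address this by noting that on the open set $\{\zeta_1(T,\cdot)>\epsilon\}$ the condition $\zeta_2(T,\cdot)=0$ coincides with $z\in\sigma(T)$, and then use the joint Lipschitz dependence of $\zeta_1,\zeta_2$ on $(T,z)$ together with parts (ii)--(iii) to match points of $\sigma_{\mathrm{res},\epsilon}(T)$ with nearby points of $\sigma_{\mathrm{res},\epsilon}(T_k)$ for $k$ large, and vice versa. Part~(v) follows by the same argument with the roles of $\zeta_1$ and $\zeta_2$ interchanged.
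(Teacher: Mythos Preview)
The paper does not prove this theorem itself; it cites \cite{Hansen2} and records only that Shargorodsky's result (Theorem~\ref{Sharg}) is the crucial ingredient. Your overall strategy is therefore consistent with what the paper indicates, and your treatment of (i) is fine.

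Your argument for (ii), however, contains a genuine error. You claim that if $z_0 \in \sigma(T)$ and $\zeta_1(z_0) > 0$, then $z_0 \in \partial\sigma(T)$, so that one can approach $z_0$ from $\rho(T)$ and reduce to the resolvent case. This implication is false --- in fact it goes the other way: since $\partial\sigma(T) \subset \sigma_{ap}(T) = \{z : \zeta_1(z) = 0\}$, any $z_0 \in \sigma(T)$ with $\zeta_1(z_0) > 0$ necessarily lies in the \emph{interior} of $\sigma(T)$. For the unilateral shift $S$ on $\ell^2(\mathbb{N})$ one computes $\zeta_1(S,z) = 1 - |z|$ for $|z| \le 1$, so every point of the open unit disc is such a $z_0$. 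In this situation the entire neighbourhood $U$ on which $\zeta_1 \geq \epsilon$ may lie inside $\sigma(T)$, and there is no resolvent to which one can reduce. One must instead apply Theorem~\ref{Sharg} in its full generality (allowing $X \neq Y$), to an analytic one-sided inverse of $T - z$ constructed on a neighbourhood where $T - z$ is merely bounded below; this is presumably why the paper singles out Theorem~\ref{Sharg} rather than the simpler Hilbert-space statement that the resolvent norm is never locally constant.

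Your sketch for (iv)--(v) is honest about its own gap but does not close it. The set $\{\zeta_2(T_k,\cdot) = 0\}$ is (the conjugate of) the approximate point spectrum of $T_k^*$, and such spectra are not lower semicontinuous under norm perturbation, so ``joint Lipschitz dependence'' of $\zeta_1,\zeta_2$ on $(T,z)$ by itself cannot produce, for a given zero of $\zeta_2(T,\cdot)$, a nearby zero of $\zeta_2(T_k,\cdot)$. A correct argument needs the semi-Fredholm structure available on $\{\zeta_1 > \epsilon\}$: there $T-z$ is bounded below, hence left semi-Fredholm with locally constant index, and it is the stability of the index under small perturbations that makes the relevant portion of the spectrum vary continuously.
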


Note that the definition of the residual pseudospectrum and the 
adjoint residual pseudospectrum does not extend to an arbitrary unital
Banach algebra because of the lack of an involution. However, if we
have a unital Banach algebra $\mathcal{A}$ with an involution we can
define the following functions:
Let $a \in \mathcal{A}$ and define 
$$
\zeta_{a,1}(z) = \min\left\{\sqrt{\lambda}: 
\lambda \in  \sigma \left((a - z)^*(a -
z)\right)\right\}
$$
$$
\zeta_{a,2}(z) = \min\left\{\sqrt{\lambda}: 
\lambda \in  \sigma \left((a - z)(a -
z)^*\right)\right\},
$$
then, clearly, Definition \ref{what} extends to elements in
$\mathcal{A}$.
The question on whether properties (i)-(v) in Theorem
\ref{subharmonic} extends to $\mathcal{A}$ is delicate. We will not
make any predictions, however, we would like to point out that 
the proof of Theorem \ref{subharmonic}  \cite{Hansen2}  makes use of a
crucial result, namely   
\begin{theorem}\label{Sharg}(Shargorodsky)\cite{Sharg2}
Let $\Omega$ be an open subset of $\mathbb{C},$ $X$ be a Banach space
and $Y$ be a uniformly convex Banach space. Suppose $A:\Omega
\rightarrow \mathcal{B}(X,Y)$ is an analytic operator valued function
such that $A^{\prime}(z)$ is invertible for all $z \in \Omega.$ If
$\|A(z)\| \leq M$ for all $z \in \Omega$ then $\|A(z)\| < M$ for all
$z \in \Omega.$  
\end{theorem}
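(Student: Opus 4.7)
The plan is to argue by contradiction. Suppose that $\|A(z_0)\|=M$ at some $z_0\in\Omega$; the aim is to force $A'(z_0)=0$, violating the invertibility hypothesis. The first step is to invoke subharmonicity of $z\mapsto\|A(z)\|$: the Cauchy mean value identity
\[
A(z_0)=\frac{1}{2\pi}\int_0^{2\pi}A(z_0+re^{i\theta})\,d\theta
\]
combined with the triangle inequality gives $\|A(z_0)\|\leq(2\pi)^{-1}\int_0^{2\pi}\|A(z_0+re^{i\theta})\|\,d\theta$ for every admissible $r$. With $\|A(z)\|\leq M$ on $\Omega$ and $\|A(z_0)\|=M$, the classical strong maximum principle for subharmonic functions forces $\|A(z)\|\equiv M$ on the connected component $\Omega_0$ of $z_0$.

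Next I would pass from the operator-valued setting to scalar holomorphic functions. Choose unit vectors $x_n\in X$ with $\|A(z_0)x_n\|\to M$, and by Hahn--Banach pick norming functionals $\phi_n\in Y^*$ satisfying $\|\phi_n\|=1$ and $\phi_n(A(z_0)x_n)=\|A(z_0)x_n\|$. The scalar holomorphic functions $g_n(z):=\phi_n(A(z)x_n)$ on $\Omega_0$ satisfy $|g_n(z)|\leq M$ and $g_n(z_0)\to M$. Montel's theorem together with a diagonal/subsequence argument and the scalar maximum modulus principle applied to every limit $g$ (which must satisfy $|g|\leq M$ and $g(z_0)=M$, hence $g\equiv M$) then yields $g_n(w)\to M$ locally uniformly on $\Omega_0$.

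The decisive step is a quantitative use of uniform convexity of $Y$. For a fixed $w\in\Omega_0$ set $y_1^{(n)}:=A(w)x_n/M$ and $y_2^{(n)}:=A(z_0)x_n/M$; both vectors satisfy $\|y_i^{(n)}\|\leq 1$, and
\[
\phi_n\!\left(\tfrac{1}{2}(y_1^{(n)}+y_2^{(n)})\right)=\tfrac{1}{2M}\bigl(g_n(w)+g_n(z_0)\bigr)\longrightarrow 1,
\]
so $\|\tfrac{1}{2}(y_1^{(n)}+y_2^{(n)})\|\to 1$. The characterising property of uniform convexity yields $\|y_1^{(n)}-y_2^{(n)}\|\to 0$, i.e.\ $A(w)x_n-A(z_0)x_n\to 0$ in $Y$. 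Taking $w=z_0+h$ and inserting the Taylor expansion $A(z_0+h)x_n=A(z_0)x_n+hA'(z_0)x_n+R_n(h)$, where $\|R_n(h)\|\leq C|h|^2$ uniformly in $n$ by Cauchy's estimates for $A''$ on a fixed smaller disc, gives $|h|\,\|A'(z_0)x_n\|\leq o_n(1)+C|h|^2$. Taking $\limsup$ in $n$ and then $h\to 0$ forces $\|A'(z_0)x_n\|\to 0$, contradicting the lower bound $\|A'(z_0)x_n\|\geq\|A'(z_0)^{-1}\|^{-1}>0$ supplied by the invertibility hypothesis.

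The principal obstacle is the third step: upgrading the \emph{scalar} convergence $g_n\to M$ to the \emph{norm} convergence $A(w)x_n-A(z_0)x_n\to 0$ in $Y$. Without the uniform convexity of the target one obtains at best weak convergence, which is too weak to override the quantitative lower bound coming from invertibility of $A'(z_0)$. The reason the classical vector-valued maximum principle cannot be applied directly is that the supremum defining $\|A(z_0)\|$ need not be attained by any single $x\in X$, which is precisely why one must build the approximating pair of sequences $(x_n,\phi_n)$ and let the geometry of $Y$ absorb the loss.
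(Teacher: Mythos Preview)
The paper does not supply a proof of this theorem: it is quoted verbatim as Shargorodsky's result and attributed to \cite{Sharg2}, so there is no ``paper's own proof'' to compare against. Your task was therefore to reconstruct an argument from scratch, and the one you give is essentially correct and follows the natural line one would expect.

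A few small points worth tightening. First, in the uniform convexity step you should make explicit that $\|y_1^{(n)}\|\to 1$ as well; this follows because $|\phi_n(y_1^{(n)})|=|g_n(w)|/M\to 1$ and $|\phi_n(y_1^{(n)})|\le\|y_1^{(n)}\|\le 1$. The version of uniform convexity you then need is the standard consequence that $\|u_n\|,\|v_n\|\to 1$ and $\|u_n+v_n\|\to 2$ force $\|u_n-v_n\|\to 0$. Second, in the final estimate the quantity $o_n(1)$ depends on $h$, so the logic is: for each fixed $h$ one gets $|h|\limsup_n\|A'(z_0)x_n\|\le C|h|^2$, hence $\limsup_n\|A'(z_0)x_n\|\le C|h|$ for every $h$, and only then does one send $h\to 0$. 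You have this right, but the phrasing ``taking $\limsup$ in $n$ and then $h\to 0$'' could be misread; it is worth writing the two lines out. With these cosmetic clarifications the argument is complete.
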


\section{The Solvability Complexity Index}\label{index}
Let $T \in \mathcal{B}(\mathcal{H})$ and suppose that we would like to
compute $\sigma(T).$ This is a non-trivial computational problem even
if the dimension of the Hilbert space is finite, say $N$. In that case the
QR-algorithm may be the method of choice and with suitable assumptions
on $T$ one can guarantee that if 
$\theta_n = \{\omega_{1,n}, \hdots, \omega_{N,n}\}$ are
the elements on the diagonal of $T_n,$ where $T_n$ is the $n$-th term
in the QR-iteration, then
$$
d_H(\theta_n, \sigma(T)) \longrightarrow 0, \qquad n \rightarrow \infty. 
$$ 
What is crucial is that there is only one numerical parameter tending
to infinity. Now, suppose that $T \in \mathcal{B}(\mathcal{H})$ is
compact. Let $\{e_j\}$ be a basis for $\mathcal{H}$ and let $P_m$ be
the projection onto $\mathrm{span}\{e_1,\hdots,e_m\}$. In that case it
follows that 
$$ 
d_H(\sigma(P_mT\lceil_{P_m\mathcal{H}}), \sigma(T)) \longrightarrow 0, \qquad
m \rightarrow \infty.   
$$
Thus, a way of computing $\sigma(T)$ is to use the QR-algorithm (or
any other appropriate convergent method, also, for the sake of the argument we
assume that the QR-algorithm converges) on $P_mT\lceil_{P_m\mathcal{H}}$ for
sufficiently large $m.$ Let $\{\omega^{(m)}_{1,n}, \hdots
\omega^{(m)}_{m,n}\}$ denote the elements of the diagonal of 
$\widetilde T_{n,m}$ where $\widetilde T_{n,m}$ is the $n$-th term in
the QR-iteration of the matrix $P_mT\lceil_{P_m\mathcal{H}}$ with
respect to the basis $\{e_j\}.$ In this case 
we may express $\sigma(T)$ as 
$$
\sigma(T) = \lim_{m \rightarrow \infty} \lim_{n \rightarrow \infty} 
\{\omega^{(m)}_{1,n}, \hdots
\omega^{(m)}_{m,n}\}.
$$
Hence, we have a way of computing $\sigma(T)$ but it requires two
limits with indices $n$ and $m$.  The question is then: what about the
general problem? Suppose that we have no more information about the
operator except that it is bounded. How many limits will we need to
compute the spectrum?   
This is the motivation for the Solvability Complexity Index, namely, we want the
Solvability Complexity Index to indicate how many limiting processes do we need to
compute the spectrum (or any other set e.g. the pseudospectrum). 

We will now define the Solvability Complexity Index, but before that we need the
definition of a set of estimating
functions of order $k$. The motivation behind the
definition is that the estimating
functions should be a method used in
actual computations and the integer $k$ determines how many limiting
processes (a la the example above) there are in order to assure
convergence. Note that we also want to include unbounded operators and
thus more formally we have the following.

\begin{definition}\label{Estimating_funct}
Let $\mathcal{H}$ be a Hilbert space spanned by $\{e_j\}_{j \in
  \mathbb{N}}$ and let 
\begin{equation}\label{delta}
\Upsilon = \{T \in \mathcal{C}(\mathcal{H}):
  \mathrm{span}\{e_j\}_{n \in \mathbb{N}} \subset \mathcal{D}(T)\}.
\end{equation}
Let $\mathcal{E} \subset \Upsilon$ and $\Xi :
\mathcal{E} \rightarrow \mathcal{F},$ 
where $\mathcal{F}$ denotes
the family of closed subsets of $\mathbb{C}.$ Let 
$$
\Pi_{\mathcal{E}} = \{\{x_{ij}\}_{i,j \in \mathbb{N}}:\exists\, T \in
\mathcal{E}, \, x_{ij} = \langle Te_j,e_i \rangle \}.
$$
A set of estimating
functions of order $k$ for $\Xi$ is a family of functions 
\begin{equation*}
\begin{split}
\Gamma_{n_1}&:\Pi_{\mathcal{E}}
\rightarrow \mathcal{F},\\
\Gamma_{n_1, n_2}&:\Pi_{\mathcal{E}}
\rightarrow \mathcal{F}, \\
\vdots\\
\Gamma_{n_1, \hdots, n_{k-1}} &:\Pi_{\mathcal{E}} \rightarrow \mathcal{F},
\end{split}
\end{equation*}
$$
\Gamma_{n_1, \hdots, n_k} :
\left\{\{x_{ij}\}_{i,j \leq N(n_1, \hdots ,n_k)}: 
\{x_{ij}\}_{i,j \in \mathbb{N}} \in \Pi_{\mathcal{E}}\right\} 
\rightarrow \mathcal{F},
$$
where $N(n_1, \hdots, n_k) < \infty$ depends on $n_1, \hdots, n_k$,
with the following properties:
\begin{itemize}
\item[(i)]
The evaluation of 
$
\Gamma_{n_1, \hdots, n_k}(\{x_{ij}\})
$
requires only finitely many arithmetic operations and radicals of the
elements $\{x_{ij}\}_{i,j \leq N(n_1, \hdots, n_k)}.$ 
\item[(ii)]
Also, 
we have the following relations between the limits:  
\end{itemize}
\begin{equation*}
\begin{split}
\Xi(T) &= \lim_{n_1 \rightarrow \infty} \Gamma_{n_1}(\{x_{ij}\}), \\
\Gamma_{n_1}(\{x_{ij}\}) &=
  \lim_{n_2 \rightarrow \infty} \Gamma_{n_1, n_2}(\{x_{ij}\}),\\
& \, \, \, \, \vdots\\
\Gamma_{n_1, \hdots, n_{k-1}}(\{x_{ij}\}) &=
  \lim_{n_k \rightarrow \infty} \Gamma_{n_1, \hdots, n_k}(\{x_{ij}\}).
\end{split}
\end{equation*}
The limit is understood to be in the Attouch-Wets metric.  
\end{definition}

\begin{definition}\label{complex_ind}
Let $\mathcal{H}$ be a Hilbert space spanned by $\{e_j\}_{j \in
  \mathbb{N}}$, define $\Upsilon$ as in (\ref{delta}),
and let 
$
\mathcal{E} \subset \Upsilon.
$
A set valued function 
$$
\Xi :
\mathcal{E} \subset \mathcal{C}(\mathcal{H}) \rightarrow \mathcal{F}
$$ is said to
have Solvability Complexity Index $k$ if $k$ is the smallest integer for which
there exists a set of 
estimating functions of order $k$ for $\Xi.$ Also,
$\Xi$ is said to have infinite Solvability Complexity Index if no set of
estimating functions exists. If there is a function
$$
\Gamma : \{\{x_{ij}\}: \exists\, T \in
\mathcal{E}, \, x_{ij} = \langle Te_j,e_i \rangle\}
\rightarrow \mathcal{F}
$$
such that $\Gamma(\{x_{ij}\}) = \Xi(T),$
and the evaluation of $\Gamma(\{x_{ij}\})$ requires
only finitely many arithmetic operations and radicals of a finite
subset of 
$\{x_{ij}\},$ then $\Xi$ is said to have Solvability Complexity Index zero.  
The Solvability Complexity Index of a function $\Xi$ will be denoted
by 
$\mathrm{SCI}(\Xi)$.
\end{definition}

\begin{example}\label{motivation}
\begin{itemize}
\item[(i)]
Let $\mathcal{H}$ be a Hilbert space with basis $\{e_j\},$ 
$\mathcal{E} =
\mathcal{B}(\mathcal{H})$ and $\Xi(T) = \sigma(T)$ for $T \in
\mathcal{B}(\mathcal{H}).$
Suppose that $\mathrm{dim}(\mathcal{H}) \leq 4.$
 Then $\Xi$ must have complexity index zero,
since one can obviously express the eigenvalues of $T$ using finitely
many arithmetic operations and radicals of the matrix elements $x_{ij}
=
\langle Te_j,e_i\rangle.$ 

\item[(ii)]
If $\mathrm{dim}(\mathcal{H}) \geq 5$ then obviously
$\mathrm{SCI}(\Xi) > 0,$ by the much celebrated theory of Abel
on the insolubility of the quintic using radicals. 

\item[(iii)]
Now, what about
compact operators? Suppose for a moment that we can show that  
$\mathrm{SCI}(\Xi) = 1$ if 
$\mathrm{dim}(\mathcal{H}) < \infty.$ A standard way of
determining the spectrum of a compact operator $T$ is to 
let $P_n$ be the
projection onto $\mathrm{span}\{e_j\}_{j \leq n}$ and compute the
spectrum of $P_nA\lceil_{P_n\mathcal{H}}.$ This approach is justified
since $\sigma(P_nA\lceil_{P_n\mathcal{H}}) \rightarrow \sigma(T)$ as
$n \rightarrow \infty.$ By the assumption on the complexity index in
finite dimensions it follows that if $\mathcal{E}$ denotes the set of
compact operators then $\mathrm{SCI}(\Xi) \leq 2.$
\end{itemize}
\end{example}

\section{The Compact Case}

The reasoning in the example does not say anything about what the
Solvability Complexity Index of spectra of compact operators is, it only suggest
that the standard way of approximating spectra of such operators will
normally make use of a construction requiring two limits, and hence it
gives us an upper bound.
However, we may very well ask the question: if $\mathcal{E}$ is the set of
compact operators on $\mathcal{H}$ and $\Xi(T) = \sigma(T)$ for $T \in
\mathcal{E},$ what is $\mathrm{SCI}(\Xi)$? This is the topic of the
next theorem.

\begin{theorem}\label{compact}
Let $\{e_j\}_{j \in \mathbb{N}}$ be a basis for the Hilbert 
space $\mathcal{H}$. Let 
$$
\mathcal{E} = \{T \in \mathcal{B}(\mathcal{H}): T \, \text{is compact}\}
$$
and
$\Xi : \mathcal{E} \rightarrow \mathcal{F}$ be defined by $\Xi(T) = \sigma(T).$ Then
$\mathrm{SCI}(\Xi) = 1.$ 
\end{theorem}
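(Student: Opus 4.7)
The plan is to prove $\mathrm{SCI}(\Xi) = 1$ by establishing both inequalities.

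\textbf{Lower bound, $\mathrm{SCI}(\Xi) \ge 1$.} Lift Example \ref{motivation}(ii) into the compact setting: take $T \in \mathcal{E}$ acting on $\mathrm{span}\{e_1,\dots,e_5\}$ as a rank-$5$ operator whose matrix in $\{e_1,\dots,e_5\}$ has a characteristic polynomial not solvable in radicals, and as $0$ on the orthogonal complement. Then $T$ is compact and $\sigma(T)$ contains the roots of this polynomial, so by Abel--Ruffini no single function $\Gamma(\{x_{ij}\})$ that uses only finitely many arithmetic operations and radicals on finitely many entries can output $\sigma(T)$. Hence no SCI-zero algorithm exists.

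\textbf{Upper bound, $\mathrm{SCI}(\Xi) \le 1$: construction.} I would build estimating functions of order $1$ by fusing two natural limits into a single parameter. Let $A_m = P_m T P_m$ denote the top-left $m \times m$ truncation of $T$ in the basis $\{e_j\}$. For each $n_1$ pick a rational grid $G_{n_1} \subset \mathbb{Q} + i\mathbb{Q}$ of spacing $1/n_1$ inside $\{|z| \le n_1\}$, a truncation size $m(n_1) \to \infty$, and a threshold $\epsilon_{n_1} \to 0$, and set
\[
\Gamma_{n_1}(\{x_{ij}\}) \;=\; \bigl\{\, z \in G_{n_1} : |\det(zI - A_{m(n_1)})|^{1/m(n_1)} \le \epsilon_{n_1}\,\bigr\}.
\]
This uses only the $m(n_1)^2$ entries $x_{ij}$ with $i,j \le m(n_1)$ and requires only finite arithmetic (for the characteristic-polynomial value, which is a polynomial in the entries), a modulus, and one $m(n_1)$th root, all permitted by Definition \ref{Estimating_funct}. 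If the determinantal detector turns out to be too coarse, an alternative is a computable power-iteration lower bound on $\|(zI - A_{m(n_1)})^{-2^{n_1}}\|^{1/2^{n_1}}$, tying the construction directly to Theorem \ref{bounded}(iv).

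\textbf{Convergence and main obstacle.} Two ingredients should yield $\Gamma_{n_1}(T) \to \sigma(T)$ in the Attouch-Wets metric. Because $T$ is compact and $P_m \to I$ strongly, $A_m \to T$ in operator norm; then Theorem \ref{bounded}(v) together with the approximation of $\sigma(T)$ by $\omega_\epsilon(\sigma(T))$ from Theorem \ref{bounded}(iv) transport norm convergence into Attouch-Wets convergence of the relevant sublevel sets. Simultaneously, $|\det(zI - A_m)|^{1/m} = \exp\bigl(m^{-1}\sum_{i=1}^m \log|z - \lambda_i(A_m)|\bigr)$ reads as an averaged log-distance to $\sigma(A_m)$, so its sublevel sets behave like pseudospectral neighbourhoods of $\sigma(A_m)$. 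The main obstacle is calibrating $(m(n_1), \epsilon_{n_1})$ so the two limits fuse along the diagonal: $\epsilon_{n_1}$ must shrink fast enough to suppress spurious pseudospectral points arising from non-normal truncations, yet slowly enough that genuine isolated eigenvalues of $T$ are still detected. Here compactness is the crucial asset, since the nonzero part of $\sigma(T)$ is discrete with $0$ the only possible accumulation point, so outside any fixed neighbourhood of $0$ the Ritz values of $A_m$ converge quantitatively to the eigenvalues of $T$ while the origin can be handled separately. Threading this careful calibration through the definition of $\Gamma_{n_1}$ is the technical heart of the argument.
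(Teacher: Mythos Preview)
Your lower bound is fine and matches the paper's implicit reasoning. The upper bound, however, rests on a false intuition: the sublevel sets of $|\det(zI-A_m)|^{1/m}$ do \emph{not} behave like pseudospectral neighbourhoods of $\sigma(A_m)$. That quantity is the geometric mean of $|z-\lambda_j(A_m)|$, so an isolated eigenvalue contributes only one small factor out of $m$, and the $1/m$-th power washes it out. Take $T=\mathrm{diag}(\lambda,0,0,\dots)$ with $\lambda$ irrational; then $A_m=\mathrm{diag}(\lambda,0,\dots,0)$ and at the nearest grid point $z_{n_1}$ to $\lambda$ one has
\[
|\det(z_{n_1}I-A_m)|^{1/m}=|z_{n_1}-\lambda|^{1/m}\,|z_{n_1}|^{(m-1)/m}.
\]
Whenever $m(n_1)/\log n_1\to\infty$ the first factor tends to $1$, so the whole expression tends to $|\lambda|>0$ and, since $\epsilon_{n_1}\to 0$, the point $\lambda$ is never detected. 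Thus any calibration that might work must have $m(n_1)=o(\log n_1)$, a regime in which $A_{m(n_1)}$ barely moves and your appeal to Theorem~\ref{bounded}(iv),(v) loses all force. The ``main obstacle'' you flag is not a technicality to be threaded later; your detector is measuring a global logarithmic potential of the eigenvalue distribution, not a local distance to the spectrum.

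The paper fixes this by testing the \emph{smallest singular value} rather than the geometric mean: it asks whether
\[
(P_n(T-z)P_n)^*\,P_n(T-z)P_n-\tfrac{1}{n^2}P_n
\]
admits a Cholesky factorisation $LL^*$ with $L\in LT_{\mathrm{pos}}(P_n\mathcal H)$. Failure of this factorisation is equivalent to $\sigma_{\min}(A_n-z)\le 1/n$, i.e.\ to $z$ lying in the closed $1/n$-pseudospectrum of the truncation, which is a genuinely local quantity---small near any single eigenvalue of $A_n$ regardless of how many others sit near $0$. A single index $n$ then controls truncation, grid and threshold, and the norm convergence $P_nTP_n\to T$ (valid precisely because $T$ is compact) together with the pseudospectral continuity of Theorem~\ref{bounded}(v) does the rest. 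Your fallback $\|(zI-A_m)^{-2^{n_1}}\|^{1/2^{n_1}}$ is pointed in this direction, but the Cholesky test is what delivers the ``finitely many arithmetic operations and radicals'' clause of Definition~\ref{Estimating_funct} cleanly.
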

\begin{proof}
Let, for $n \in \mathbb{N}$, $P_n$ denote the projection onto 
$\mathrm{span}\{e_1,\hdots,e_n\}.$ 
Define also
\begin{equation}\label{theta_k}
\Theta_n = \{z \in \mathbb{C}: 
\Re z, \Im z = r\delta, r \in \mathbb{Z}, |r| \leq n \}, \qquad 
\delta = \sqrt{\frac{1}{n}},
\end{equation}
and let 
$$
\Gamma_{n}(\{x_{ij}\}) = \{z \in \Theta_{n}
: \nexists \,
L \in LT_{\mathrm{pos}}(P_n\mathcal{H}), T_{1/n}(z) = LL^*\},
$$
where $LT_{\mathrm{pos}}(P_n\mathcal{H})$ denotes the lower
triangular matrices in $P_n\mathcal{H}$ (with respect to $\{e_1,\hdots,e_n\}$)  with positive
elements on the diagonal and 
$$
T_{1/n}(z) = (P_n(T - z)P_n)^*P_n(T - z)P_n - \frac{1}{n^2}P_n.
$$ 
We claim that $\Gamma_{n}$ can be evaluated using only
finitely many arithmetic operations and radicals of the matrix
elements $\{x_{ij}\}.$ Indeed, to determine if $z \in \Theta_k$ is in 
$\Gamma_{k}(\{x_{ij}\})$ one has to determine if $ T_{1/n}(z)$ has a
Cholesky decomposition or not. This can indeed be done by using
finitely many arithmetic operations and radicals of $\{x_{ij}\}.$ The
fact that $\Theta_n$ contains only finitely many elements yields the
assertion.
To see that  
$$
d_H(\Gamma_{n}(\{x_{ij}\}),\sigma(T)) \longrightarrow 0, \qquad n
\rightarrow \infty,
$$
one uses that fact that $\sigma(P_nKP_n)\rightarrow \mathrm{sp}(T)$ and the definition of the pseudospectrum. We omit the details. 
\end{proof}

This gives us the obvious corollary for operators on finite
dimensional Hilbert spaces.

\begin{corollary}
Let $\{e_j\}_{j =1}^N$ be a basis for the Hilbert 
space $\mathcal{H}$, and suppose that $ 5 \leq N < \infty$. Let 
$
\mathcal{E} = \mathcal{B}(\mathcal{H})
$
and
$\Xi : \mathcal{E} \rightarrow \mathcal{F}$ be defined by $\Xi(T) = \sigma(T).$ Then
$\mathrm{SCI}(\Xi) = 1.$ 
\end{corollary}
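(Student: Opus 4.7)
The plan is to prove the two inequalities $\mathrm{SCI}(\Xi) \geq 1$ and $\mathrm{SCI}(\Xi) \leq 1$ separately.

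For the lower bound $\mathrm{SCI}(\Xi) \geq 1$, I would simply invoke Example \ref{motivation}(ii): if there were a function $\Gamma$ expressing $\sigma(T)$ by finitely many arithmetic operations and radicals in the matrix entries, restricting to a generic $5\times5$ block would give an algebraic solution by radicals of a generic quintic characteristic polynomial, contradicting Abel's theorem. Since $N \geq 5$, this rules out $\mathrm{SCI}(\Xi) = 0$.

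For the upper bound $\mathrm{SCI}(\Xi) \leq 1$, I would directly adapt the construction from the proof of Theorem \ref{compact}, observing that in finite dimensions the projections $P_n$ stabilize ($P_n = I$ for $n \geq N$), so no outer limit is needed and a single family $\{\Gamma_n\}$ suffices. Explicitly, for $n \in \mathbb{N}$ let $\Theta_n$ be the grid defined in (\ref{theta_k}) and set
\[
\Gamma_n(\{x_{ij}\}) = \{z \in \Theta_n : \nexists\, L \in LT_{\mathrm{pos}}(\mathcal{H}),\ (T-z)^*(T-z) - n^{-2} I = LL^*\}.
\]
The evaluation of $\Gamma_n$ requires only finitely many arithmetic operations and radicals of $\{x_{ij}\}_{i,j \leq N}$, because testing for a Cholesky factorization is itself such a procedure and $\Theta_n$ is finite. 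By standard linear algebra, a Hermitian matrix fails to be positive definite precisely when it fails to admit a Cholesky factorization with strictly positive diagonal, so $z \in \Gamma_n(\{x_{ij}\})$ iff $(T-z)^*(T-z) - n^{-2} I$ is not positive definite, i.e.\ iff $z$ lies in the $(0,1/n)$-pseudospectrum $\sigma_{0,1/n}(T)$, intersected with the grid $\Theta_n$.

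To finish one shows $d_H(\Gamma_n(\{x_{ij}\}),\sigma(T)) \to 0$. By Theorem \ref{bounded}(iv), $\overline{\sigma_{0,1/n}(T)}$ converges in Hausdorff distance to $\sigma(T)$ as $n \to \infty$; since the grid spacing $\sqrt{1/n}$ tends to $0$ and $\Theta_n$ eventually covers a neighbourhood of the (bounded) spectrum, the grid-restricted sets also converge to $\sigma(T)$. Combined with the lower bound this yields $\mathrm{SCI}(\Xi) = 1$. The only substantive step is verifying the grid-approximation of the pseudospectrum, but this follows from uniform continuity of $z \mapsto \|(T-z)^{-1}\|^{-1}$ on compact sets, which is immediate in finite dimensions; everything else is routine given Theorem \ref{compact}.
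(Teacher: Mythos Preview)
Your approach matches the paper's: the corollary is presented there as an immediate consequence of Theorem~\ref{compact}, with the lower bound from Example~\ref{motivation}(ii), and you faithfully spell out what that adaptation looks like. However, in filling in the details the paper omits, you inherit a scale mismatch from the construction in Theorem~\ref{compact}, and your appeal to uniform continuity does not close it.

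The problem is that the grid $\Theta_n$ has spacing $n^{-1/2}$ while your singular-value threshold is $n^{-1}$. For $z\in\sigma(T)$ the nearest grid point $w$ satisfies only $|w-z|\le\sqrt{2/n}$, and the $1$-Lipschitz estimate $\gamma_0(w)\le|w-z|$ gives $\gamma_0(w)\le\sqrt{2/n}$, which need not be $\le 1/n$. Concretely, for $T=\tfrac{1}{3}I$ and $n=100$ the nearest real grid point to $1/3$ is $3/10$, with $\gamma_0(3/10)=1/30>1/100$, and one checks $\Gamma_{100}=\emptyset$; this recurs for infinitely many $n$, so $d_H(\Gamma_n,\sigma(T))\not\to0$. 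The repair is trivial: replace $n^{-2}I$ by, say, $(2/n)I$ in the Cholesky test, so the threshold $\sqrt{2/n}$ dominates the grid diameter, and then your argument goes through verbatim. A separate minor point: Theorem~\ref{bounded}(iv) concerns $n\to\infty$ for fixed $\epsilon$, not $\epsilon\to0$ for fixed index $0$; the convergence $\overline{\sigma_{0,1/n}(T)}\to\sigma(T)$ you actually need is the elementary nested-compacta fact $\bigcap_{n}\{z:\gamma_0(z)\le 1/n\}=\sigma(T)$.
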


\section{The Bounded Case}\label{compute}
As we saw in the previous section, handling the compact case
essentially relies on the fact that the spectral properties of a
section $P_nTP_n$ of a compact operator $T$ resemble the spectral
properties of $T$ for large $n$ (here $P_n$ is as in the proof of
Theorem \ref{compact}). This may not be the case for an arbitrary
bounded operator. In this case completely different techniques must be
used.

\begin{theorem}\label{the_bounded} \cite{Hansen2} 
Let $\{e_j\}_{j \in \mathbb{N}}$ be a basis for the Hilbert 
space $\mathcal{H}$ and let 
$
\mathcal{E} = \mathcal{B}(\mathcal{H}).
$
Define,
for $n \in \mathbb{Z}_+, \epsilon > 0,$ the set valued functions 
$$
\Xi_1, \Xi_2, \Xi_3,\Xi_4, \Xi_5: \mathcal{E} \rightarrow \mathcal{F}
$$ by
$$ 
\Xi_1(T) = \overline{\sigma_{n,\epsilon}(T)}, \quad \Xi_2(T) =\overline{
\omega_{\epsilon}(\sigma(T))}, \quad \Xi_3(T) = \sigma(T),
$$ 
$$
\Xi_4(T) = \overline{\sigma_{\mathrm{res},\epsilon}(T)}, \quad  \Xi_5(T) = \overline{\sigma_{\mathrm{res*},\epsilon}(T)}.
$$
Then 
$$
\mathrm{SCI}(\Xi_1) \leq 2, \qquad \mathrm{SCI}(\Xi_2) \leq 3,
\qquad \mathrm{SCI}(\Xi_3) \leq 3,
$$
$$
\mathrm{SCI}(\Xi_4) \leq 2, \qquad \mathrm{SCI}(\Xi_5) \leq 2.
$$
\end{theorem}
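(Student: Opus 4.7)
The strategy is to build, for each $\Xi_i$, a family of estimating functions of the claimed order, generalising the Cholesky-based test from the proof of Theorem~\ref{compact}. Throughout, let $P_m$ denote the orthogonal projection onto $\mathrm{span}\{e_1,\dots,e_m\}$; following that proof, the inequality $\Phi_n(S,z)<\epsilon$ (and its adjoint analogue) can be decided on any finite truncation using only finitely many arithmetic operations and radicals of the matrix entries, once a small shift is introduced to turn ``$\leq$'' into a strict Cholesky check.

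For $\Xi_1$, I would invoke Theorem~\ref{bounded}(ii)--(iii) to identify $\overline{\sigma_{n,\epsilon}(T)}=\{z:\gamma_n(z)\leq\epsilon\}$, place a grid $\Theta_{n_1}$ as in (\ref{theta_k}) on a disk whose radius grows with $n_1$, and set
\[
\Gamma_{n_1,n_2}(\{x_{ij}\})=\bigl\{z\in\Theta_{n_1}:\text{the Cholesky test on the }P_{n_2}\text{-truncation fails at }z\bigr\},
\]
where the test encodes both $\Phi_n(T,z)<\epsilon$ and $\Phi_n(T^*,\bar z)<\epsilon$. Pointwise as $n_2\to\infty$, the truncated quantity recovers $\gamma_n(z)$; Proposition~\ref{boundary} applied to $f=\gamma_n$ then upgrades this to convergence in the Attouch--Wets metric when $n_1\to\infty$ refines the grid and the disk.

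For $\Xi_2$ and $\Xi_3$, I would stack an additional outer layer. For $\Xi_2$, a single limit $n\to\infty$ on top, via Theorem~\ref{bounded}(iv), converts the two-limit construction for $\Xi_1$ into a three-limit construction for $\overline{\omega_\epsilon(\sigma(T))}$. For $\Xi_3=\sigma(T)$ both $n\to\infty$ and $\epsilon\to 0$ are needed, which naively yields four levels; the key step will be a diagonal coupling $(n,\epsilon)=(n(n_1),\epsilon(n_1))$ for which a single outer limit in $n_1$ drives $\overline{\sigma_{n,\epsilon}(T)}\to\sigma(T)$, using monotonicity from Theorem~\ref{bounded}(i) and a Dini-type argument in the spirit of Proposition~\ref{boundary}. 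For $\Xi_4$ and $\Xi_5$, the residual pseudospectra are characterised via the pair $(\zeta_1,\zeta_2)$ of Definition~\ref{what}; I would test ``$\zeta_1(z)>\epsilon$'' by success of a Cholesky check on the truncation with a tolerance shrinking in the inner index, and ``$\zeta_2(z)=0$'' by its failure with the same tolerance, relying on Theorem~\ref{subharmonic}(iv)--(v) for the outer continuity, so that two limits (truncation and grid) suffice.

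The main obstacle will be showing that finite-section truncations $P_m T P_m$—which in general converge to $T$ only strongly, not in norm—still produce the correct pseudospectral level sets in the limit. This is where Shargorodsky's maximum-modulus principle (Theorem~\ref{Sharg}) and the combinatorial rigidity of the Cholesky test become essential: together they preclude the ``jumping'' of level sets that would otherwise derail a diagonal argument, and they underwrite the coupling that brings $\mathrm{SCI}(\Xi_3)$ down from $4$ to $3$.
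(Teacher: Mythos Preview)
There is a genuine gap in your construction for $\Xi_1$. You use $n_1$ solely as a grid parameter and let the single truncation $P_{n_2}TP_{n_2}$ carry all the analytic work, asserting that ``pointwise as $n_2\to\infty$, the truncated quantity recovers $\gamma_n(z)$''. This is false in general. Take $T=\bigoplus_{j\geq 1}\left(\begin{smallmatrix}0&1\\1&0\end{smallmatrix}\right)$ acting on the pairs $(e_{2j-1},e_{2j})$, so that $\sigma(T)=\{-1,1\}$ and $\gamma_0(T,0)=1$. For every odd $n_2$ one has $P_{n_2}TP_{n_2}\,e_{n_2}=0$, hence $\gamma_0(P_{n_2}TP_{n_2},0)=0$, while for even $n_2$ it equals $1$. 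Thus your $\Gamma_{n_1,n_2}$ oscillates at $z=0$ and has no $n_2$-limit, so the family is not a set of estimating functions at all. This is precisely the spectral pollution phenomenon; Theorem~\ref{Sharg} concerns constancy of the resolvent norm on open sets (it underlies Theorem~\ref{bounded}(iii)) and does nothing to prevent finite-section pollution.

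The paper's remedy is the missing idea: both indices must be projection parameters, not one projection and one grid. One forms the \emph{nested} compressions
\[
T_{n_1,n_2}(z)=P_{n_1}\bigl((P_{n_2}(T-z)P_{n_2})^*\bigr)^{2^n}\bigl(P_{n_2}(T-z)P_{n_2}\bigr)^{2^n}\big|_{P_{n_1}\mathcal H},
\]
so that as $n_2\to\infty$ one obtains $T_{n_1}(z)=P_{n_1}((T-z)^*)^{2^n}(T-z)^{2^n}|_{P_{n_1}\mathcal H}$ in norm (finite-rank compression of a weakly convergent sequence), and only then the outer limit $n_1\to\infty$ gives the variational descent $\min\sigma(T_{n_1}(z))\searrow\Phi_n(T,z)^{2^{n+1}}$. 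The grid $\Theta_{n_1}$ is tied to the outer projection index. A minor point: your treatment of $\Xi_3$ is overcomplicated. Since $\overline{\sigma_{n,\epsilon}(T)}=\{z:\gamma_n(z)\leq\epsilon\}$ shrinks to $\sigma(T)$ as $\epsilon\to 0$ for any \emph{fixed} $n$, a single extra limit in $\epsilon$ over the two-limit scheme for $\Xi_1$ already yields $\mathrm{SCI}(\Xi_3)\leq 3$; no coupling of $n$ and $\epsilon$ is needed.
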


 We will not prove the theorem here, but refer to \cite{Hansen2} for
 details. However, we will sketch the ideas on how to 
 construct the set of estimating functions. 
We start by constructing a set of estimating
functions for $\Xi_1.$
In particular, given the
 matrix elements  
 $x_{ij} = \langle Te_j, e_i \rangle$ of $T \in
 \mathcal{B}(\mathcal{H})$ we define the following:  
\begin{equation}\label{Gamma}
\begin{split}
\Gamma_{n_1,n_2}(\{x_{ij}\}) & = \{z \in \Theta_{n_1}
: \nexists \,
L \in LT_{\mathrm{pos}}(P_{n_1} \mathcal{H}),
T_{\epsilon, n_1, n_2}(z)
= LL^*\}
\\& \qquad \quad \cup \{z \in \Theta_{n_1}
: \nexists \,
L \in LT_{\mathrm{pos}}(P_{n_1} \mathcal{H}), \widetilde
T_{\epsilon, n_1, n_2}(z)
= LL^*\},\\
\Gamma_{n_1}(\{x_{ij}\}) &= \{z \in \Theta_{n_1}
: (-\infty, 0] \cap \sigma(T_{\epsilon, n_1}(z)) \neq \emptyset\} \\
& \qquad \quad \cup \{z \in \Theta_{n_1}
: (-\infty, 0] \cap \sigma(\widetilde T_{\epsilon,n_1}(z))\neq \emptyset\}, \qquad n_1, n_2 \in \mathbb{N},
\end{split}
\end{equation} 
where 
$LT_{\mathrm{pos}}(P_m \mathcal{H})$ denotes the set of lower
  triangular matrices in $\mathcal{B}(P_m \mathcal{H})$ (with respect to $\{e_j\}$)
  with strictly positive diagonal elements and
\begin{equation}\label{allT}
\begin{split}
T_{\epsilon, n_1, n_2}(z) &= T_{n_1, n_2}(z) - \epsilon^{2^{n+1}} I,\\
\widetilde T_{\epsilon, n_1, n_2}(z) &= \widetilde T_{n_1, n_2}(z) -
\epsilon^{2^{n+1}} I, \\
T_{\epsilon, n_1}(z) &= T_{n_1}(z) - \epsilon^{2^{n+1}} I,\\
\widetilde T_{\epsilon, n_1}(z) &= \widetilde T_{n_1}(z) - \epsilon^{2^{n+1}} I,
\end{split}
\end{equation}  
where $T_{n_1, n_2},$ $\widetilde T_{n_1, n_2},$
$T_{n_1}$ and $\widetilde T_{n_1}$ are defined by
\begin{equation*}
\begin{split}
T_m(z) &= P_m((T -
  z)^*)^{2^n} (T - 
  z)^{2^n}\vert_{P_m\mathcal{H}},\\  
T_{m,k}(z) &=  P_m((P_k(T - z)P_k)^*)^{2^n} (P_k(T -
  z)P_k)^{2^n}\vert_{P_m\mathcal{H}},\\
\widetilde T_m(z) &= P_m(T -
  z)^{2^n} ((T - z)^*)^{2^n}\vert_{P_m\mathcal{H}},\\
\widetilde T_{m,k}(z) &= P_m(P_k(T - z)P_k)^{2^n} ((P_k(T -
  z)P_k)^*)^{2^n}\vert_{P_m\mathcal{H}}.
\end{split}
\end{equation*}
One can then argue that the evaluation of
$\Gamma_{n_1,n_2}(\{x_{ij}\})$ indeed requires only finitely many
arithmetic operations and radicals. This is pretty straightforward,
however, showing that  
\begin{equation*}
\begin{split}
\Xi_1(T) &= \lim_{n_1 \rightarrow \infty} \Gamma_{n_1}(\{x_{ij}\}), \\
\Gamma_{n_1}(\{x_{ij}\}) &=
  \lim_{n_2 \rightarrow \infty} \Gamma_{n_1, n_2}(\{x_{ij}\}),\\
\end{split}
\end{equation*}
requires some more work. As long as we can establish that our
construction actually yields a set of estimating functions of order
two for $\Xi_1$ we can deduce that  
$\mathrm{SCI}(\Xi_1) \leq 2$ and then Theorem \ref{bounded}
yields that $\mathrm{SCI}(\Xi_2) \leq 3$. The fact that 
$\mathrm{SCI}(\Xi_3) \leq 3$
is clear from the fact that $\mathrm{SCI}(\Xi_1) \leq 2$.

\begin{remark}
It is tempting to try clever ways of subsequencing in order to reduce
the bound on the Solvability Complexity Index. However, the trained
eye of an operator theorist will immediately spot the difficulties
with such a strategy. This is confirmed in the following proposition.
\end{remark}

\begin{proposition}
Let $\epsilon > 0$, $n = 0$ and $\Gamma_{n_1,n_2}$ be defined as in
(\ref{Gamma}). There does NOT exists a subsequence
$\{k_m\}_{m\in\mathbb{N}}$ such that  
$$
\Gamma_{m,k_m}(\{x_{ij}\}) \longrightarrow \overline{\sigma_{\epsilon}(T)},
\qquad m \rightarrow \infty, \quad x_{ij} = \langle Te_j, e_i \rangle, \quad \forall \, T \in \mathcal{B}(\mathcal{H}).
$$
\end{proposition}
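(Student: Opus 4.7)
The natural approach is proof by contradiction. Assume such a subsequence $\{k_m\}$ exists. The key structural observation is that $\Gamma_{m,k_m}(\{x_{ij}\})$ is a function of the finite block $\{x_{ij}\}_{i,j \leq k_m}$ only; equivalently, $\Gamma_{m,k_m}(T)$ depends only on the finite section $P_{k_m} T P_{k_m}$ and on the values $z \in \Theta_m$. Consequently, if two bounded operators $T_1, T_2$ satisfy $P_{k_m} T_1 P_{k_m} = P_{k_m} T_2 P_{k_m}$ for a given $m$, then $\Gamma_{m,k_m}(T_1) = \Gamma_{m,k_m}(T_2)$, even though $\overline{\sigma_\epsilon(T_1)}$ and $\overline{\sigma_\epsilon(T_2)}$ can differ substantially. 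The strategy is to exploit this rigidity: given $\{k_m\}$, build a specific $T$ depending on $\{k_m\}$ for which $\Gamma_{m,k_m}(T)$ fails to track $\overline{\sigma_\epsilon(T)}$ along infinitely many $m$.

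For the construction I would use operators whose pseudospectral behavior is driven by structure living past each index $k_m$, while the truncation $P_{k_m} T P_{k_m}$ keeps producing essentially the same ``small'' set. A clean candidate is a rank-one type perturbation of the identity of the form $T = I + v\langle \cdot, w\rangle$ in which $w$ has its mass concentrated on basis vectors $e_N$ with $N > k_m$ for many $m$; by Sherman--Morrison one has $\overline{\sigma_\epsilon(T)} \supset \{z: |z-1| < \sqrt{\|v\|\|w\|\,\epsilon}\,\}$, which can be made of fixed macroscopic size, while $P_{k_m} T P_{k_m} = P_{k_m}$ whenever the support of $w$ lies strictly above $k_m$, forcing $\Gamma_{m,k_m}(T) \subset \overline{D_{\epsilon}(1)}$. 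To make the discrepancy persist for infinitely many $m$, one stacks such perturbations at an increasing sequence of scales, placing the ``hidden'' support of the $j$-th perturbation inside $(k_{m_j}, k_{m_j+1}]$ along a subsequence $m_j \to \infty$ chosen so that, for each $j$, the first $m_j$ truncations $P_{k_1}, \ldots, P_{k_{m_j}}$ do not see the $j$-th perturbation.

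Once $T$ is in hand, the verification proceeds in two halves. First, compute $\overline{\sigma_\epsilon(T)}$ using the combined contribution of all stacked perturbations and show that it contains a fixed disk independent of $j$. Second, along the chosen $m_j$, show $\Gamma_{m_j,k_{m_j}}(T)$ sits inside a much smaller set, using that $P_{k_{m_j}} T P_{k_{m_j}}$ coincides with the truncation of a ``nice'' comparison operator whose $\Gamma$ is explicitly computable (e.g.\ the identity, for which $\Gamma_{m_j,k_{m_j}}$ reduces to $\overline{D_{\epsilon}(1)} \cap \Theta_{m_j}$). The Hausdorff/Attouch--Wets distance between these sets is then bounded below by a positive constant, contradicting $\Gamma_{m,k_m}(T) \to \overline{\sigma_\epsilon(T)}$.

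The main obstacle is producing one fixed bounded operator $T$ that defeats the whole subsequence $\{k_m\}$ simultaneously, not merely a family $\{T^{(m)}\}$ beating each $m$ separately. Boundedness requires controlling the summability of the stacked perturbations, and one must at the same time guarantee that their cumulative effect enlarges $\overline{\sigma_\epsilon(T)}$ by a fixed amount. A second subtlety is that $\Gamma_{m,k_m}$ involves both the $T_{\epsilon,n_1,n_2}$ and $\widetilde T_{\epsilon,n_1,n_2}$ branches of the definition; one must therefore arrange the construction so that \emph{both} the ``column'' and the ``row'' finite compressions miss the relevant approximate eigenvectors, which is what motivates the choice of an off-diagonal rank-one structure whose symmetric partner $w\langle\cdot,v\rangle$ is also killed by $P_{k_m}$ at the chosen scales. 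Modulo these technicalities, invoking Theorem~\ref{bounded}(iv) for the limiting comparison and the Attouch--Wets characterization in terms of compact sets closes the argument.
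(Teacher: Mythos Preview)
Your under-estimation strategy has a genuine gap. The stacking of rank-one/Jordan perturbations does not keep $\Gamma_{m_j,k_{m_j}}(T)$ small along an infinite subsequence. At step $m_j$ the relevant matrices $P_{m_j}(T-z)^*P_{k_{m_j}}(T-z)P_{m_j}$ and $P_{m_j}(T-z)P_{k_{m_j}}(T-z)^*P_{m_j}$ depend on the first $m_j$ columns and the first $m_j$ rows of $T$; once a Jordan-type perturbation has any support in $\{1,\dots,m_j\}$, its approximate null vector lies in $P_{m_j}\mathcal H$ (for one of the two branches) and $\Gamma_{m_j,k_{m_j}}$ already contains the enlarged disc $\{|z-1|\lesssim\sqrt{C\epsilon}\}$, not just $\overline{D_\epsilon(1)}$. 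Your own stacking places perturbation $l$ with support past $k_{m_l}$, but at step $m_j$ the perturbations $1,\dots,j-1$ are fully visible and do exactly this. More abstractly, a bounded $T$ whose first $m$ rows \emph{and} columns agree with those of the identity for infinitely many $m$ must be the identity, so you cannot have $\Gamma_{m_j,k_{m_j}}(T)\subset\overline{D_\epsilon(1)}$ infinitely often while $\overline{\sigma_\epsilon(T)}$ is strictly larger.

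The paper's argument goes in the opposite direction: it produces \emph{over}-estimation. Given $\{k_m\}$ (with $k_m>m$), set $T=S+S^*$ where $\langle Se_j,e_i\rangle=1+\epsilon$ exactly when $i=m$, $j=k_m+1$. Then $Te_m$ carries its mass at $e_{k_m+1}$, which $P_{k_m}$ annihilates, so $P_mT^*P_{k_m}TP_m$ has a zero eigenvalue at $e_m$ and $0\in\Gamma_{m,k_m}$ for every $m>1$. On the other hand $P_mT^*TP_m=(1+\epsilon)^2P_m$, which keeps $0$ (indeed a whole disc $\mathbb D(0,1/8)$) out of $\Gamma_m$, hence out of $\overline{\sigma_\epsilon(T)}$. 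The crucial device you are missing is that a \emph{different} coordinate $e_m$ is sacrificed at each step $m$, with its image planted just beyond the corresponding cutoff $k_m$; this is what allows one fixed bounded $T$ to defeat the entire sequence simultaneously.
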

\begin{proof}
We will argue by contradiction and 
suppose that such a subsequence exists. Then, obviously may assume that $k_m > m$ for
all $m \in \mathbb{N}$. 
Now define the operator $S$ on $\mathrm{span}\{e_j\}_{j\in\mathbb{N}}$ by 
$$
\langle S e_j ,e_i\rangle = 
\begin{cases} 1+ \epsilon & i = m, j = k_m +1,
\\
0 &\text{otherwise}.
\end{cases}
$$
It is easy to see that $S$ extends to a bounded operator on $\mathcal{H}$ with $\|S\| \leq 1$. Define $T = S + S^*.$ Then a 
straightforward computation gives that for $m >1$ (since $k_m > m$ ) 
it follows that  
\begin{equation}\label{wrong}
P_mT^*TP_m =(1+\epsilon)^2 P_m, \qquad P_mT^*P_{k_m}TP_m = (1+\epsilon)^2 P_{m-1}.
\end{equation}
And it is this equation that will yield the contradiction. 
Letting $x_{ij} = \langle T e_j ,e_i\rangle$ we have (as argued above) that 
$
\Gamma_{m}(\{x_{ij}\}) \longrightarrow \overline{\sigma_{\epsilon}(T)}, $
as $m\rightarrow \infty,
$
and by hypothesis that 
$$
\Gamma_{m,k_m}(\{x_{ij}\}) \longrightarrow
\overline{\sigma_{\epsilon}(T)}, \qquad m\rightarrow \infty, 
$$
hence
\begin{equation}\label{contr}
d_H(\Gamma_{m}(\{x_{ij}\}), \Gamma_{m,k_m}(\{x_{ij}\}))
\longrightarrow 0, \qquad m \rightarrow \infty. 
\end{equation}
It is the latter statement that is not true, and therefore gives us the desired contradiction. 
We will now demonstrate why.
Note that 
\begin{equation*}
\begin{split}
\Gamma_{m}(\{x_{ij}\}) &= \{z \in \mathbb{C}
: (-\infty, 0] \cap \sigma(T_{\epsilon, m}(z)) \neq \emptyset\} \\
& \qquad \cup \{z \in \mathbb{C}
: (-\infty, 0] \cap \sigma(\widetilde T_{\epsilon,m}(z))\neq \emptyset\}, 
\end{split}
\end{equation*}
where 
$$
T_{\epsilon, m}(z) =  P_mT^*T\vert_{P_m\mathcal{H}} - P_m \bar zT\vert_{P_m\mathcal{H}} - 
P_m zT^*\vert_{P_m\mathcal{H}} - (\epsilon - |z|)I,
$$
$$
\widetilde T_{\epsilon, m}(z) =
P_mT^*T\vert_{P_m\mathcal{H}} - P_m \bar zT\vert_{P_m\mathcal{H}} - 
P_m zT^*\vert_{P_m\mathcal{H}} - (\epsilon - |z|)I.
$$
Hence, by the first part of (\ref{wrong}) and the fact that $\|T\| \leq 2$ it follows that 
$$
(-\infty, 0] \cap \sigma(T_{\epsilon, m}(z)) = \emptyset, \qquad 
(-\infty, 0] \cap \sigma(\widetilde T_{\epsilon,m}(z))=  \emptyset, \qquad |z| \leq 1/8.
$$
In particular,
\begin{equation}\label{splat}
\mathbb{D}(0,1/8) \cap \Gamma_{m}(\{x_{ij}\}) = \emptyset, \quad \forall \, m \in \mathbb{N}\setminus \{1\},
\end{equation} 
where $\mathbb{D}(0,1/8)$ denotes the closed disc centered at zero with
radius $1/8$. On the other hand we have that
\begin{equation*}
\begin{split}
\Gamma_{m,k_m}(\{x_{ij}\}) &= \{z \in \mathbb{C}
: (-\infty, 0] \cap \sigma(T_{\epsilon, m,k_m}(z)) \neq \emptyset\} \\
& \qquad \cup \{z \in \mathbb{C}
: (-\infty, 0] \cap \sigma(\widetilde T_{\epsilon,m,k_m}(z))\neq \emptyset\}, 
\end{split}
\end{equation*}
where 
$$
T_{\epsilon,m,k_m}(z) =  P_m(T - z)^*P_{k_m}(T -
  z)\vert_{P_m\mathcal{H}} - \epsilon I,
  $$
$$
\widetilde T_{\epsilon,m,k_m}(z) =  P_m(T - z)P_{k_m}(T -
  z)^*\vert_{P_m\mathcal{H}} - \epsilon I.
$$
Thus, by the last part of (\ref{wrong}) it follows that 
$$
0 \in \sigma(T_{\epsilon, m,k_m}(z)), \qquad z = 0 \quad \forall \, m \in \mathbb{N}\setminus \{1\}.
$$
In particular,
$0 \in \Gamma_{m,k_m}(\{x_{ij}\})$ for all $m > 1$. This, together with (\ref{splat}) contradicts
(\ref{contr}), and we are done.

\end{proof}

Although, as the previous proposition suggest, clever choices of sub sequences 
are not going to help, extra structure on the operators does help. This
is documented in the next theorem.

\begin{theorem} \cite{Hansen2} 
Let $\{e_j\}_{j \in \mathbb{N}}$ be a basis for the Hilbert 
space $\mathcal{H}$ and let $d$ be a positive integer. Define 
$$
\mathcal{E} = \{T \in \mathcal{B}(\mathcal{H}):\langle Te_{j+l},
e_j\rangle = \langle Te_j, e_{j+l}\rangle = 0, \quad l > d \}.
$$
Let $\epsilon > 0$ and $n \in \mathbb{Z}_+$ and 
$\Xi_1, \Xi_2, \Xi_3: \mathcal{E} \rightarrow \mathcal{F}$ be defined by
$
\Xi_1(T) = \overline{\sigma_{n,\epsilon}(T)}$, $\Xi_2(T) =\overline{
\omega_{\epsilon}(\sigma(T))}$ and $\Xi_3(T) = \sigma(T).
$ Then 
$$
\mathrm{SCI}(\Xi_1) =1, \qquad \mathrm{SCI}(\Xi_2) \leq 2,
\qquad \mathrm{SCI}(\Xi_3) \leq 2.
$$
\end{theorem}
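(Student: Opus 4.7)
The strategy is to exploit the bandedness of $T$ to collapse the inner truncation limit $n_2$ in the construction from Theorem~\ref{the_bounded}, reducing each of those SCI bounds by one. First I would show $\mathrm{SCI}(\Xi_1)\le 1$ by proving that the truncated operator $T_{n_1,n_2}(z)$ from~(\ref{Gamma}) already agrees with $T_{n_1}(z)$ on $P_{n_1}\mathcal{H}$ once $n_2$ exceeds an explicit function of $n_1$; then establish a matching lower bound via diagonal operators; finally deduce the bounds for $\Xi_2$ and $\Xi_3$ by appending a single outer limit and invoking Theorem~\ref{bounded}(iv).

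Fix $T\in\mathcal{E}$ and $\xi\in P_{n_1}\mathcal{H}$. Since $T$ is $d$-banded, each application of $(T-z)$ enlarges the support of a vector by at most $d$; an induction on $j$ gives $(T-z)^{j}\xi\in P_{n_1+jd}\mathcal{H}$, and the same applies to $(T-z)^*$, so $((T-z)^*)^{2^n}(T-z)^{2^n}\xi\in P_{n_1+2^{n+1}d}\mathcal{H}$. A parallel induction shows that the truncated product $((P_k(T-z)P_k)^*)^{2^n}(P_k(T-z)P_k)^{2^n}\xi$ coincides with the untruncated one for every $k\ge k(n_1):=n_1+2^{n+1}d$. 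Hence $T_{n_1,k}(z)=T_{n_1}(z)$ and $\widetilde T_{n_1,k}(z)=\widetilde T_{n_1}(z)$ as operators on $P_{n_1}\mathcal{H}$, so $\Gamma_{n_1,k(n_1)}(\{x_{ij}\})=\Gamma_{n_1}(\{x_{ij}\})$. Setting $\widehat\Gamma_{n_1}:=\Gamma_{n_1,k(n_1)}$ therefore yields an order-one estimating family for $\Xi_1$: each $\widehat\Gamma_{n_1}$ uses only the finitely many entries with $i,j\le k(n_1)$ and amounts to Cholesky tests on the finite grid $\Theta_{n_1}$, while $\widehat\Gamma_{n_1}\to\overline{\sigma_{n,\epsilon}(T)}$ by the convergence already established in Theorem~\ref{the_bounded}. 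Thus $\mathrm{SCI}(\Xi_1)\le 1$.

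For the matching lower bound, $\mathcal{E}$ contains every bounded diagonal operator, and for $T=\mathrm{diag}(a_1,a_2,\dots)$ the set $\overline{\sigma_{n,\epsilon}(T)}$ is the closed $\epsilon$-neighborhood of $\{a_j\}_{j\in\mathbb{N}}$, depending on infinitely many matrix entries. Any SCI-zero algorithm, viewed as a finitely terminating procedure that queries entries one at a time, reads only finitely many of them on any fixed input; starting from $T=0$ it queries some finite index set $S$ and outputs the closure of $\{w:|w|<\epsilon\}$, but modifying a single diagonal entry $a_{j^\star}$ with $j^\star\notin S$ to a value far from the origin leaves the computation path unchanged yet enlarges $\overline{\sigma_{n,\epsilon}(T)}$, a contradiction. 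Hence $\mathrm{SCI}(\Xi_1)\ge 1$, so equality holds.

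For $\Xi_2$ and $\Xi_3$ we append one outer limit to the order-one construction, which remains valid when the pseudospectral parameter $n$ and tolerance $\epsilon$ are allowed to depend on an auxiliary index. For $\Xi_2$, take $\Gamma_{n_1,n_2}$ to be the level-$n_2$ estimate of $\overline{\sigma_{n_1,\epsilon}(T)}$ built as above; Theorem~\ref{bounded}(iv) then gives $\lim_{n_1}\overline{\sigma_{n_1,\epsilon}(T)}=\overline{\omega_\epsilon(\sigma(T))}=\Xi_2(T)$, so $\mathrm{SCI}(\Xi_2)\le 2$. For $\Xi_3$, use the diagonal choice $\epsilon=1/n_1$: combining Theorem~\ref{bounded}(iv) with the obvious inclusion $\omega_{1/n_1}(\sigma(T))\subset\omega_\eta(\sigma(T))$ for $n_1>1/\eta$ shows $\overline{\sigma_{n_1,1/n_1}(T)}\to\sigma(T)$ in Hausdorff metric for each fixed $T$, yielding $\mathrm{SCI}(\Xi_3)\le 2$. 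The technical heart of the argument is the exact identity $T_{n_1,k}(z)=T_{n_1}(z)$ for $k\ge k(n_1)$; the rest is bookkeeping combined with the convergence results of Section~\ref{morespseudo}.
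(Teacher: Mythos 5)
Your proposal is correct and follows essentially the same route as the paper: the paper's sketch likewise collapses the inner limit by taking the truncation level to be an explicit function of $n_1$ determined by the bandwidth, namely $n_2 = 2^n d + n_1$ (your slightly larger cutoff $n_1+2^{n+1}d$ also works, and in fact $n_1+2^nd$ suffices since $P_{n_1}S^*SP_{n_1}=(SP_{n_1})^*(SP_{n_1})$). Your added adversary argument for the lower bound $\mathrm{SCI}(\Xi_1)\ge 1$ and the diagonal choice $\epsilon=1/n_1$ for $\Xi_3$ are consistent with how the cited reference handles those parts.
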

The proof of this theorem is a little involved and we refer to \cite{Hansen2} for details, however, we will sketch 
the ideas.
To prove the theorem one defines a set of estimating function for $\Xi_1$ (containing only one element) 
as follows. For $T \in \mathcal{E}$, $x_{ij} = \langle Te_j, e_i \rangle$ and 
 $\Theta_k$ defined as in (\ref{theta_k}), with $k \in \mathbb{N},$ we let
\begin{equation*}
\begin{split}
\Gamma_k  &= \{z \in  \Theta_k
: \nexists \,
L \in LT_{\mathrm{pos}}(P_k \mathcal{H}), 
T_{\epsilon,k,2^nd + k  }(z)
= LL^*\}
\\& \qquad \quad \cup \{z \in  \Theta_k
: \nexists \,
L \in LT_{\mathrm{pos}}(P_k \mathcal{H}), \widetilde
T_{\epsilon, k,2^nd + k}(z) = LL^*\},
\end{split}
\end{equation*}
where $T_{\epsilon,k,2^nd + k  }(z)$ and $\widetilde
T_{\epsilon, k,2^nd + k}(z)$ are defined in (\ref{allT}).

\section{The Unbounded Case}
So far we have only considered bounded operators, however, in quantum
mechanics it is really the unbounded operators that have the most
impact. In particular, we are interested in determining spectra of
Schr\"{o}dinger operators
$$
H = -\Delta + V,
$$
and more specifically we are interested in non-self-adjoint Schr\"{o}dinger
operators. The reason why we need non-self-adjointness in Quantum
Mechanics is threefold:
\begin{itemize}
\item[(i)] Open Systems: If the system is open, meaning that particles
  will enter or exit (or both) on cannot have energy
  preservation. Thus the time evolution operator 
$e^{-itH}$ cannot be unitary and hence $H$ cannot be self-adjoint.
\item[(ii)] Closed Systems (PT-symmetry, alternative inner products) \cite{Bender3}: 
Physicists have recently considered the possibility that the usual inner
  product on $L^2(\mathbb{R}^d)$ can be replaced by a different inner
  product. Thus a Schr\"{o}dinger operator on $L^2(\mathbb{R}^d)$ may be
  self-adjoint with the usual inner product, however, non-self
  adjoint with another inner product, and vice versa.
\item[(iii)] Resonances \cite{Zworski2}: This is a well know phenomenon in Quantum
  Mechanics that yield non-self-adjoint operators.      
\end{itemize}

Fortunately, we have bounds on the Solvability Complexity Index also
for unbounded operators.

\begin{theorem}\label{total} \cite{Hansen2} 
Let $\{e_j\}_{j \in \mathbb{N}}$ and 
$\{\tilde e_j\}_{j \in \mathbb{N}}$ be
bases for the Hilbert 
space $\mathcal{H}$ and let 
\begin{equation*}
\begin{split}
\mathcal{\tilde E} &= \{T \in \mathcal{C}(\mathcal{H}\oplus \mathcal{H}): 
T = T_1
\oplus T_2, T_1 , T_2 \in \mathcal{C}(\mathcal{H}), T_1^* = T_2\}\\
\mathcal{E} &=\{T \in \mathcal{\tilde E}: \mathrm{span}\{e_j\}_{j \in \mathbb{N}}\, 
\text{is a core
for}\, \, T_1, \, 
\mathrm{span}\{\tilde e_j\}\, \text{is a core
for} \,\, T_2\}. 
\end{split}
\end{equation*}
Let $\epsilon > 0,$ 
$\Xi_1: \mathcal{E} \rightarrow \mathcal{F}$ and 
$\Xi_2: \mathcal{E} \rightarrow \mathcal{F}$ be defined by 
$
\Xi_1(T) = 
\overline{\sigma_{\epsilon}(T_1)}$ and $\Xi_2(T) = \sigma(T_1).
$
Then 
$$\mathrm{SCI}(\Xi_1) \leq 2, \qquad \mathrm{SCI}(\Xi_2) \leq 3.$$
\end{theorem}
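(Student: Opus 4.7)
The plan is to extend the construction of Theorem \ref{the_bounded} to unbounded operators by exploiting the product structure $T=T_1\oplus T_1^{*}$, which together with the core hypotheses gives simultaneous finite-matrix access to $T_1$ in the basis $\{e_j\}$ and to $T_1^{*}=T_2$ in the basis $\{\tilde e_j\}$. The identity I would work from is
\[
\overline{\sigma_{\epsilon}(T_1)} \;=\; \{z\in\mathbb{C}:\gamma_0(z)\le\epsilon\},\qquad \gamma_0(z)=\min\bigl(\Phi_0(T_1,z),\,\Phi_0(T_1^{*},\bar z)\bigr),
\]
which remains valid for closed unbounded $T_1$ and reduces the problem to approximating two nonnegative functions of $z$, one from the primal and one from the dual side.

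For $\Xi_1$, let $P_k$ and $\tilde P_k$ denote the projections onto $\mathrm{span}\{e_1,\dots,e_k\}$ and $\mathrm{span}\{\tilde e_1,\dots,\tilde e_k\}$. For $k\le m$, set
\[
\alpha_{k,m}(z)^{2}=\min\bigl\{\langle\xi,P_k(T_1-z)^{*}P_m(T_1-z)P_k\xi\rangle:\xi\in P_k\mathcal{H},\,\|\xi\|=1\bigr\},
\]
and define $\beta_{k,m}(z)^{2}$ analogously from $T_2-\bar z$ using $\tilde P_k$ and $\tilde P_m$. Each is the smallest eigenvalue of a $k\times k$ positive matrix whose entries involve only the $x_{ij}$ with indices bounded by some $N(k,m)<\infty$. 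As $m\to\infty$, the operator $P_k(T_1-z)^{*}P_m(T_1-z)P_k$ increases monotonically in the positive-operator order on the finite-dimensional space $P_k\mathcal{H}$ toward $P_k(T_1-z)^{*}(T_1-z)P_k$, so $\alpha_{k,m}(z)\nearrow\alpha_k(z):=\inf\{\|(T_1-z)\xi\|:\xi\in P_k\mathcal{H},\|\xi\|=1\}$. As $k\to\infty$, the feasible set grows and the core hypothesis $\overline{T_1|_{\mathrm{span}\{e_j\}}}=T_1$ forces $\alpha_k(z)\searrow\Phi_0(T_1,z)$; dually $\beta_{k,m}\nearrow\beta_k\searrow\Phi_0(T_1^{*},\bar z)$.

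Following the template of (\ref{Gamma}) and the proof of Theorem \ref{compact}, I would then put
\[
\Gamma_{n_1,n_2}(\{x_{ij}\})=\{z\in\Theta_{n_1}:\min(\alpha_{n_1,n_2}(z),\beta_{n_1,n_2}(z))\le\epsilon\},
\]
\[
\Gamma_{n_1}(\{x_{ij}\})=\{z\in\Theta_{n_1}:\min(\alpha_{n_1}(z),\beta_{n_1}(z))\le\epsilon\},
\]
with $\Theta_{n_1}$ the grid from (\ref{theta_k}) and the inequality decided by failure of a Cholesky decomposition of an appropriate shifted matrix, so only finitely many arithmetic operations and radicals of the $x_{ij}$ are needed. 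Combining the two monotonicities with Proposition \ref{boundary} (applied at the outer stage, since $\gamma_0$ satisfies the boundary property exactly as in Theorem \ref{bounded}(iii)) yields $\Gamma_{n_1,n_2}\to\Gamma_{n_1}$ as $n_2\to\infty$ and $\Gamma_{n_1}\to\overline{\sigma_{\epsilon}(T_1)}$ as $n_1\to\infty$, both in Attouch-Wets. This gives $\mathrm{SCI}(\Xi_1)\le2$. For $\Xi_2=\sigma(T_1)$, append one further limit $\epsilon=\epsilon_{n_0}\to0$, using that $\bigcap_{\epsilon>0}\overline{\sigma_{\epsilon}(T_1)}=\sigma(T_1)$ together with locally uniform convergence in $\epsilon$, to obtain $\mathrm{SCI}(\Xi_2)\le3$.

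The main obstacle is the outer convergence $\Gamma_{n_1}\to\overline{\sigma_{\epsilon}(T_1)}$. The core hypothesis is precisely what is needed to exclude spurious points: a $z$ with $\gamma_0(z)>\epsilon$ must eventually leave $\Gamma_{n_1}$, but without the core for $T_1$ the infima over $P_k\mathcal{H}$ need not descend to $\Phi_0(T_1,z)$. The dual assumption on $\{\tilde e_j\}$ is not cosmetic either: without it the $\Phi_0(T_1^{*},\cdot)$ branch of $\gamma_0$ cannot be seen from finite data, and points where $T_1-z$ is injective with closed but non-dense range (the residual spectrum) would be missed by the primal truncation alone. The conversion of pointwise monotone convergence of $\alpha_{k,m},\beta_{k,m},\alpha_k,\beta_k$ into the desired Attouch-Wets set convergence is then supplied uniformly by Proposition \ref{boundary}.
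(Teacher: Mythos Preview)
The paper does not prove Theorem~\ref{total}; it is simply quoted from \cite{Hansen2}. Your proposal is the natural adaptation of the bounded-case construction sketched after Theorem~\ref{the_bounded} to the closed-operator setting, and the overall strategy is correct: the core hypotheses on $\{e_j\}$ and $\{\tilde e_j\}$ are precisely what force $\alpha_k(z)\searrow\Phi_0(T_1,z)$ and $\beta_k(z)\searrow\Phi_0(T_1^{*},\bar z)$, the two-stage monotonicity (up in $m$ on a fixed finite-dimensional $P_k\mathcal H$, then down in $k$) is the right mechanism, and the Cholesky test on the shifted Gram matrices gives the finite-arithmetic evaluation.

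Two points need tightening. First, you invoke ``the boundary property exactly as in Theorem~\ref{bounded}(iii)'' for $\gamma_0$, but that theorem is stated only for $T\in\mathcal{B}(\mathcal{H})$. For closed unbounded $T_1$ the identity $\overline{\{z:\gamma_0(z)<\epsilon\}}=\{z:\gamma_0(z)\le\epsilon\}$ needs a separate justification via Theorem~\ref{Sharg}: the resolvent $z\mapsto (z-T_1)^{-1}$ is analytic on the resolvent set with values in $\mathcal B(\mathcal H)$, and Hilbert spaces are uniformly convex, so Shargorodsky's argument applies and the resolvent norm cannot be locally constant. You actually do not need this for the \emph{convergence} step, since $\min(\alpha_{n_1},\beta_{n_1})\searrow\gamma_0$ monotonically and Proposition~\ref{boundary}(i) suffices; you need it only to identify the limit with $\overline{\sigma_\epsilon(T_1)}$ rather than the possibly larger set $\{z:\gamma_0(z)\le\epsilon\}$. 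Second, since $\sigma_\epsilon(T_1)$ is typically unbounded when $T_1$ is unbounded, the outer limit is genuinely in the Attouch--Wets metric and not Hausdorff; the expanding grid $\Theta_{n_1}$ together with the $1$-Lipschitz property of $\gamma_0$ (it is an infimum of $1$-Lipschitz functions of $z$) is what lets the discretisation error be absorbed into the $n_1\to\infty$ limit, and this should be stated explicitly.
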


\begin{corollary}
Let $\{e_j\}_{j \in \mathbb{N}}$ be a basis for the Hilbert 
space $\mathcal{H}$ and let 
$$
\mathcal{E} = \{A \in \mathcal{SA}(\mathcal{H}): 
\mathrm{span}\{e_j\}_{j \in \mathbb{N}}\, \text{is a core for}\, A\}.
$$ 
Let $\epsilon > 0$  and 
$\Xi_1, \Xi_2: \mathcal{E} \rightarrow \mathcal{F}$ be defined by 
$
\Xi_1(T) =
\sigma(T)$ and $\Xi_2(T) =
\overline{\omega_{\epsilon}(\sigma(T))}.
$ Then 
$$
\mathrm{SCI}(\Xi_1) \leq 3, \qquad \mathrm{SCI}(\Xi_2) \leq 2.
$$
\end{corollary}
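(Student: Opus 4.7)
The plan is to reduce the corollary directly to Theorem~\ref{total} by embedding a self-adjoint operator $A \in \mathcal{E}$ as a direct sum. Given $A$ in the corollary's class, form $T = A \oplus A$ acting on $\mathcal{H} \oplus \mathcal{H}$, and take the second basis in Theorem~\ref{total} to be $\tilde e_j = e_j$. Since $A^* = A$ we have $T_1 = A$, $T_2 = A$ and $T_1^* = T_2$, so $T \in \tilde{\mathcal{E}}$. The core assumption on $\mathrm{span}\{e_j\}$ for $A$ transfers verbatim to both summands, placing $T$ in the smaller class $\mathcal{E}$ of Theorem~\ref{total}. Crucially, the matrix entries $\langle T(e_j,0),(e_i,0)\rangle$, $\langle T(0,e_j),(0,e_i)\rangle$ needed by the estimating functions of Theorem~\ref{total} are, by the direct-sum structure, exactly $\langle A e_j, e_i\rangle$ (and the cross terms vanish), hence are computable from the data available for $A$.

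For $\Xi_1(A) = \sigma(A)$ of the corollary: this equals $\sigma(T_1)$, which is the second set-valued map in Theorem~\ref{total}. The theorem provides estimating functions of order at most three for $\sigma(T_1)$, and by the observation above, these same functions apply to the input $\{\langle A e_j, e_i\rangle\}$. This gives $\mathrm{SCI}(\Xi_1) \leq 3$.

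For $\Xi_2(A) = \overline{\omega_\epsilon(\sigma(A))}$: the key observation is that for self-adjoint $A$ one has the normality identity
$$
\|(z - A)^{-1}\| = \frac{1}{d(z,\sigma(A))}, \qquad z \notin \sigma(A),
$$
which follows from the spectral theorem (the resolvent is a normal operator whose spectral radius equals its norm). Consequently $\sigma_\epsilon(A) = \omega_\epsilon(\sigma(A))$, so $\Xi_2(A) = \overline{\sigma_\epsilon(A)}$, which coincides with the first map in Theorem~\ref{total} applied to $T = A \oplus A$. Since that theorem bounds its index by $2$, we conclude $\mathrm{SCI}(\Xi_2) \leq 2$.

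The only non-routine point in the plan is verifying that the embedding $A \mapsto A \oplus A$ genuinely lands in the admissible class of Theorem~\ref{total} with matching core data, and that the estimating functions for the direct-sum operator are expressible from the matrix elements of $A$ alone; both are immediate from the block-diagonal form. No genuine obstacle is expected, as the corollary is essentially a specialization of Theorem~\ref{total} exploiting the self-adjointness identity $\|(z-A)^{-1}\|^{-1} = d(z,\sigma(A))$.
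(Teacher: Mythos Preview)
Your proposal is correct and follows exactly the intended route: the paper states the corollary without proof immediately after Theorem~\ref{total}, and your reduction via $T = A \oplus A$ together with the self-adjoint identity $\|(z-A)^{-1}\|^{-1} = d(z,\sigma(A))$ (so that $\overline{\sigma_\epsilon(A)} = \overline{\omega_\epsilon(\sigma(A))}$) is precisely how the corollary is meant to be read off. The observation that the matrix elements of $A\oplus A$ in the natural basis of $\mathcal{H}\oplus\mathcal{H}$ are determined by $\{\langle Ae_j,e_i\rangle\}$ is the only bookkeeping needed, and you handle it correctly.
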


\section{Polynomial numerical hulls}

Polynomial numerical hulls were defined in \cite{Nevanlinna_book}  with an extension to convergence properties of approximations in \cite{Nevanlinna_95}. Basic properties and related later work are highlighted in \cite{Davies_book}, see also \cite{Greenbaum1, Greenbaum2}.
The original definitions   for polynomial numerical hulls were for
bounded operators in Banach space but  we here carry the discussion
within a unital  Banach algebra $\mathcal A$.  
Given  an element $a\in \mathcal A$  and a monic polynomial $p$ we set
\begin{equation}\label{aivanperusvertailu}
V_p(a)= \{z\in\mathbb C \ : \ |p(z)| \le  \|p(a)\| \}.
\end{equation}
It follows from the {\it spectral mapping theorem}  that such a set is
always an inclusion set for the spectrum 
$$
\sigma(a)\subset V_p(a).
$$
The set $V_p(a)$ can have at most $d$ components where $d$ denotes the
degree of the polynomial. It follows from the maximum principle that
each component is simply connected and as a consequence possible holes
in the spectrum cannot be uncovered with this tool. On the other hand,
denoting by $\hat{\sigma}(a)$ the {\it polynomially convex hull} of
the spectrum, it is well known that
the following holds. 
\begin{proposition}\label{alialgebranspektri}
The spectrum of $a\in \mathcal A$ considered as an element in the
subalgebra it generates is  $\hat{\sigma}(a)$. 
\end{proposition}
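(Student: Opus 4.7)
The plan is to write $\mathcal{B}$ for the closed unital subalgebra of $\mathcal{A}$ generated by $a$---which is commutative, being the norm-closure of polynomials in $a$ and $e$---and to establish the two inclusions $\sigma_\mathcal{B}(a)\subset\hat\sigma(a)$ and $\hat\sigma(a)\subset\sigma_\mathcal{B}(a)$, where $\sigma_\mathcal{B}(a)$ denotes the spectrum of $a$ computed in $\mathcal{B}$.

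For the first inclusion, I would fix $z_0\in\mathbb{C}\setminus\hat\sigma(a)$. Since $\hat\sigma(a)$ is polynomially convex by definition and $z_0$ lies off it, Runge's theorem supplies polynomials $p_n$ with $p_n(z)\to 1/(z_0-z)$ uniformly on some open neighborhood $U$ of $\hat\sigma(a)$. Choosing a cycle $\Gamma\subset U\setminus\sigma(a)$ winding once around $\sigma(a)$ and invoking the Cauchy representation underlying the holomorphic functional calculus,
\[
(z_0-a)^{-1}-p_n(a)=\frac{1}{2\pi i}\oint_\Gamma\Bigl(\frac{1}{z_0-z}-p_n(z)\Bigr)(z-a)^{-1}\,dz,
\]
together with boundedness of $(z-a)^{-1}$ on the compact set $\Gamma$, gives $p_n(a)\to(z_0-a)^{-1}$ in the norm of $\mathcal{A}$. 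Since every $p_n(a)$ lies in $\mathcal{B}$ and $\mathcal{B}$ is closed, the limit $(z_0-a)^{-1}$ lies in $\mathcal{B}$, so $z_0\notin\sigma_\mathcal{B}(a)$.

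For the reverse inclusion, the obvious containment $\sigma(a)\subset\sigma_\mathcal{B}(a)$ (invertibility in $\mathcal{B}$ yields invertibility in $\mathcal{A}$) reduces matters to showing that $\sigma_\mathcal{B}(a)$ is polynomially convex, for then $\hat\sigma(a)\subset\sigma_\mathcal{B}(a)$ automatically. Suppose towards contradiction that some bounded component $W$ of $\mathbb{C}\setminus\sigma_\mathcal{B}(a)$ exists and pick $z_0\in W$. Then $(z_0-a)^{-1}\in\mathcal{B}$, so there are polynomials $p_n$ with $p_n(a)\to(z_0-a)^{-1}$ in the norm of $\mathcal{B}$. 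Because $\mathcal{B}$ is a commutative unital Banach algebra singly generated by $a$, the Gelfand transform identifies the maximal ideal space $\widehat{\mathcal{B}}$ with $\sigma_\mathcal{B}(a)$ via $\chi\mapsto\chi(a)$, and the contractivity of the Gelfand transform turns norm convergence in $\mathcal{B}$ into uniform convergence on $\sigma_\mathcal{B}(a)$, yielding
\[
p_n(w)\longrightarrow\frac{1}{z_0-w}\quad\text{uniformly for }w\in\sigma_\mathcal{B}(a).
\]
The entire functions $q_n(w)=p_n(w)(z_0-w)-1$ then tend uniformly to $0$ on $\partial W\subset\sigma_\mathcal{B}(a)$, and the maximum modulus principle applied on $\overline{W}$ forces $q_n\to 0$ uniformly on $\overline{W}$; this is incompatible with $q_n(z_0)=-1$ for every $n$, the desired contradiction.

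I expect the main technical obstacle to be the polynomial-convexity step: it rests on the precise bridge between norm approximation in $\mathcal{B}$ and uniform approximation on $\sigma_\mathcal{B}(a)$ via Gelfand theory, together with the observation that $\widehat{\mathcal{B}}\cong\sigma_\mathcal{B}(a)$ when $\mathcal{B}$ is singly generated. The first inclusion, by contrast, is a fairly direct application of Runge's theorem combined with the standard contour-integral estimate from the holomorphic functional calculus, and should cause no serious difficulty.
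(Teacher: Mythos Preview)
Your argument is correct. Both inclusions are handled cleanly: the first via Runge's theorem and the contour-integral estimate for the holomorphic functional calculus, the second via Gelfand theory for the singly generated commutative algebra $\mathcal{B}$ combined with the maximum modulus principle. The reduction of the reverse inclusion to the polynomial convexity of $\sigma_{\mathcal{B}}(a)$ is valid since $\sigma(a)\subset\sigma_{\mathcal{B}}(a)$ and the polynomial hull is monotone.

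As for comparison with the paper: the paper does \emph{not} prove this proposition. It is introduced with the phrase ``it is well known that the following holds'' and stated without argument or even a reference. Your write-up therefore supplies a complete proof where the paper offers none; what you have is essentially the classical argument one finds in Banach-algebra texts (e.g.\ the treatment underlying the Shilov idempotent/boundary theory in sources such as Aupetit, which the paper cites elsewhere). There is nothing to reconcile---you have filled a deliberate omission.
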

\begin{example}
If $S \in \mathcal B(l^2(\mathbb{Z}))$ denotes the  bilateral shift, then the
spectrum is the unit circle but within the subalgebra generated by it
it is the whole closed disc. 
\end{example}
Let  us denote by $\mathbb P^{0}$ the set of all monic polynomials and by 
$ \mathbb P_d ^{0}$ the set of monic polynomials of degree at most $d$.

\begin{theorem} We have
\begin{equation}\label{olavinlause}
\bigcap_{p\in \mathbb P^0} V_p(a) =\hat{\sigma}(a).
\end{equation}
\end{theorem}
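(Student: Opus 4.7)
The plan is to establish the two inclusions in \eqref{olavinlause} separately.

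For the easy direction $\hat{\sigma}(a) \subset \bigcap_{p \in \mathbb{P}^0} V_p(a)$, I would fix a monic polynomial $p$ and verify $\hat{\sigma}(a) \subset V_p(a)$. By the very definition of the polynomially convex hull, any $z \in \hat{\sigma}(a)$ satisfies $|p(z)| \leq \sup_{w \in \sigma(a)}|p(w)|$. The spectral mapping theorem identifies this supremum with the spectral radius $\rho(p(a))$, and $\rho(p(a)) \leq \|p(a)\|$. Thus $|p(z)| \leq \|p(a)\|$, i.e.\ $z \in V_p(a)$; intersecting over all monic $p$ closes this direction.

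The reverse inclusion is the substantive one, and I would argue by contrapositive. Suppose $z_0 \notin \hat{\sigma}(a)$. Then there exists a polynomial $q$ with $|q(z_0)| > \sup_{w \in \sigma(a)}|q(w)|$. Constant $q$ cannot witness this (equality always holds), so $q$ is nonconstant; normalizing by its leading coefficient yields a monic $p$ with
\begin{equation*}
|p(z_0)| \;>\; \sup_{w \in \sigma(a)}|p(w)| \;=\; \rho(p(a)).
\end{equation*}
Here lies the only real obstacle: the strict inequality is against the spectral radius $\rho(p(a))$, which may be strictly smaller than $\|p(a)\|$, so we cannot conclude directly that $z_0 \notin V_p(a)$. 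The remedy is the power trick. For each $n \in \mathbb{N}$, $p^n$ is still monic, and Gelfand's spectral radius formula gives $\|p(a)^n\|^{1/n} \to \rho(p(a))$. Since $\rho(p(a)) < |p(z_0)|$, for some $n_0$ large enough we have $|p(z_0)|^{n_0} > \|p(a)^{n_0}\|$, i.e.\ $|p^{n_0}(z_0)| > \|p^{n_0}(a)\|$. Hence $z_0 \notin V_{p^{n_0}}(a)$, and in particular $z_0 \notin \bigcap_{p} V_p(a)$.

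The main technical step is the power trick: it bridges the gap between the spectral radius, at which polynomial convexity naturally gives strict separation from $\sigma(a)$, and the Banach algebra norm, which is what enters the definition of $V_p(a)$. Everything else is bookkeeping from the spectral mapping theorem, the definition of $\hat{\sigma}(a)$, and the fact that monicity is preserved under taking powers.
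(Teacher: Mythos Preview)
Your argument is correct. The paper itself does not supply a proof of this theorem; it simply refers the reader to \cite{Nevanlinna_book} and \cite{Davies_book}. The route you take---spectral mapping for the inclusion $\hat{\sigma}(a)\subset V_p(a)$, and for the reverse direction separating $z_0$ from $\sigma(a)$ by a polynomial and then invoking Gelfand's formula on the powers $p^n$ to upgrade the spectral-radius inequality to a norm inequality---is the standard proof and is essentially what one finds in those references. There is nothing to correct.
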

For a proof see \cite{Nevanlinna_book} or \cite{Davies_book}.  
The {\it polynomial numerical hull of order k} is the set
$$
V^k(a):= \bigcap_{p\in \mathbb P_k^0} V_p(a).
$$
If $T \in \mathcal B(\mathcal H)$ then  denote by $W(T)$ the {\it numerical range} of $T$. Recall that the numerical range may not be closed in infinite dimensional cases.
\begin{theorem}
For $T \in \mathcal B(\mathcal H)$ we have
$$
 V^1(T) = \overline {W(T)}.
$$
\end{theorem}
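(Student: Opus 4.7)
The plan is to prove the two inclusions $\overline{W(T)} \subset V^1(T)$ and $V^1(T) \subset \overline{W(T)}$ separately. First observe that the monic polynomials of degree at most $1$ are precisely the functions $p_\lambda(z) = z - \lambda$ for $\lambda \in \mathbb{C}$ (together with the constant $1$, which contributes all of $\mathbb{C}$), so
$$
V^1(T) = \bigcap_{\lambda \in \mathbb{C}} \bigl\{z \in \mathbb{C} : |z - \lambda| \le \|T - \lambda I\|\bigr\},
$$
i.e.\ $V^1(T)$ is the intersection of all closed discs $\overline{\mathbb{D}}(\lambda,\|T - \lambda I\|)$. The problem is therefore to identify this intersection with $\overline{W(T)}$.

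For the inclusion $\overline{W(T)} \subset V^1(T)$, I would use the translation identity $W(T - \lambda I) = W(T) - \lambda$ together with the standard bound $W(S) \subset \overline{\mathbb{D}}(0,\|S\|)$, which follows from the Cauchy--Schwarz inequality $|\langle Sx,x\rangle| \le \|S\|$ when $\|x\| = 1$. Taking closures, any $z \in \overline{W(T)}$ satisfies $z - \lambda \in \overline{W(T-\lambda I)} \subset \overline{\mathbb{D}}(0,\|T - \lambda I\|)$ for every $\lambda$, so $z \in V^1(T)$.

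The reverse inclusion $V^1(T) \subset \overline{W(T)}$ is the substantive part. I would prove the contrapositive: given $z \notin \overline{W(T)}$, exhibit some $\lambda$ with $|z - \lambda| > \|T - \lambda I\|$. Here I invoke the Toeplitz--Hausdorff theorem, so that $\overline{W(T)}$ is closed and convex, and apply a separating-line argument: after an affine change of coordinates $w \mapsto e^{i\theta}(w - w_0)$ (which is compatible with both the numerical range and with the family of discs, since multiplying $T$ by $e^{i\theta}$ and translating only reparameterises the collection of $\lambda$'s), I may assume $\overline{W(T)} \subset \{\Re w \le 0\}$ and $z = c > 0$. Choose $\lambda = -r$ for $r > 0$ large. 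Then for every unit vector $x$,
$$
\|(T + rI)x\|^2 = \|Tx\|^2 + 2r\,\Re\langle Tx,x\rangle + r^2 \le \|T\|^2 + r^2,
$$
where the middle term is non-positive because $\Re\langle Tx,x\rangle \le 0$. Hence $\|T - \lambda I\|^2 \le \|T\|^2 + r^2$, whereas $|z - \lambda|^2 = (c+r)^2 = c^2 + 2cr + r^2$. Choosing $r > (\|T\|^2 - c^2)/(2c)$ makes the latter strictly larger, so $z \notin V_{p_\lambda}(T)$.

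The main obstacle is the reverse inclusion, and within it the indispensable appeal to Toeplitz--Hausdorff: without convexity of $\overline{W(T)}$ one could not separate $z$ by a half-plane and reduce to the normalisation $\Re w \le 0$ that drives the asymptotic estimate. Once that geometric reduction is in place, the norm bound is a one-line computation exploiting the cross-term's sign, so the remaining work is purely routine.
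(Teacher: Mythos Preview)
Your argument is correct. The paper does not actually supply a proof of this theorem; it simply refers the reader to \cite{Nevanlinna_book}. The route you take---writing $V^1(T)$ as the intersection of the closed discs $\overline{\mathbb{D}}(\lambda,\|T-\lambda I\|)$, obtaining $\overline{W(T)}\subset V^1(T)$ from Cauchy--Schwarz, and obtaining the reverse inclusion by combining Toeplitz--Hausdorff convexity with a separating-line normalisation and the elementary estimate $\|(T+rI)x\|^{2}\le \|T\|^{2}+r^{2}$---is precisely the standard argument one finds in that reference, so there is nothing to compare against and nothing to repair.
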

  The proof is in \cite{Nevanlinna_book}. The theorem has actually a simple form also  for operators in Banach spaces. For results on higher order numerical hulls  we refer to \cite{Davies_book}.

\section{Results based on polynomial numerical hull techniques}
 
The discussion here is formulated  in unital Banach algebras.  For basic properties of spectral theory we refer to \cite{Aupetit}. When we deal with  operators in Hilbert or Banach spaces spectral information often enters naturally via a black box of the form: input a vector, output the image under the operator.  So, in particular,  the boundary of the spectrum is a subset of the approximate point spectrum, which is determined exactly by operator vector multiplication. More specifically, for $T \in \mathcal{B}(\mathcal{X}),$ we have
\begin{equation}\label{approxpointspektr}
\sigma_{ap}(T) =\{ z \in \mathbb C \ : \   \inf_{\|x\|=1} \|(T-z)x\|=0 \}.
\end{equation}
This allows us to search for points in the spectrum given that we can carry out operator vector multiplication. The situation in an abstract Banach algebra is very different.  
In short: for $a\in \mathcal A$, in order to determine the spectrum, we are supposed to {\it search for} complex numbers $z$ such that the resolvent {\it does not} exist.  This is not a very useful starting point for computations.  The picture reverses itself almost automatically: it is more natural to search for $z$ such that 
$$
(z-a)^{-1}
$$
exists, 
allowing the possibility for a constructive approach.  Clearly, this happens if and only if there is a $b(z) \in \mathcal A$ such that
$$
\| (z-a) b(z) -1\| <1.
$$
In this formulation it is also  clear that  if such an element  $b(z)$ in the subalgebra generated by $a$ is constructed, we 
know that the point is not in the spectrum, and also we can write down the inverse.  A typical algorithm in this type of case tries to find such an element and  terminates after it has been found, and otherwise it runs for ever.  In this sense  deciding whether a given complex number  $z$ is  outside of $\hat{\sigma}(a)$  appears easier than deciding whether $z\in\hat{\sigma}(a)$. In some contexts people speak of {\it semi-decidable} situations.
 We start by stating a "metatheorem".
In what follows we shall  assume that the following  operations are  available.
\begin{itemize}
\item[(i)]
Given $a\in \mathcal A$ one can form powers of $a$ and combine them into polynomials $p(a)=\sum_{j=0} ^d \alpha_j a^j$with complex coefficients. 
\item[(ii)]
Given $p(a) \in \mathcal A$ we can evaluate  its norm $\|p(a)\|$.
 \item[(iii)]
 Normal computations with complex numbers are assumed, for example we can ask for given $z\in \mathbb C$ whether
 $$
 |p(z)| > \|p(a)\|.
 $$
 \item[(iv)] What we are {\it not} assuming is the operation  of inversion for invertible elements:
$$
a \mapsto a^{-1}.
$$
\end{itemize}

\begin{proposition} {\bf "Metatheorem"}\label{meta}
Suppose you could compute, with a finite number of operations of (i)-(iii),  for given $a\in \mathcal A$,  compact  sets $K_n(a) \subset \mathbb C$ (where $n \in \mathbb{N}$) such that 
\begin{itemize}
\item[(v)]
testing whether a complex number is inside $K_n(a)$ requires only a finite number of steps, 
 \item[(vi)]
$$
K_{n+1}(a) \subset K_n(a) 
$$
\item[(vii)] 
$$
\hat{\sigma}(a) = \bigcap_{n \ge 1} K_n(a). 
$$
\end{itemize} 
Then the question, whether
$$
z\in \hat{\sigma}(a),
$$
would be answered in finite number of steps if the answer is negative, and otherwise  the process would run for ever.
\end{proposition}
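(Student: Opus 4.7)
The plan is to build the advertised algorithm directly from the hypotheses and then use a compactness/monotonicity observation to prove its soundness. First I would describe the algorithm. Given $z \in \mathbb{C}$ and $a \in \mathcal{A}$, run the following loop for $n = 1, 2, 3, \ldots$: compute a finite description of $K_n(a)$ using operations (i)--(iii), which is possible by hypothesis in finitely many steps; then use (v) to test in finitely many steps whether $z \in K_n(a)$; if $z \notin K_n(a)$ halt and output ``$z \notin \hat{\sigma}(a)$,'' otherwise increment $n$ and continue. Each iteration of the loop terminates, so the only question is when the outer loop terminates.

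Next I would analyze the two cases. If $z \in \hat{\sigma}(a) = \bigcap_{n \ge 1} K_n(a)$, then $z \in K_n(a)$ for every $n$, so the termination test in the loop is never triggered and the algorithm runs forever, as claimed. Conversely, suppose $z \notin \hat{\sigma}(a)$; I need to show the loop halts at some finite stage. Since $\hat{\sigma}(a) = \bigcap_{n \ge 1} K_n(a)$ and $z$ is not in this intersection, there must exist some $n_0$ with $z \notin K_{n_0}(a)$. At iteration $n_0$ the test in (v) returns ``no'' and the algorithm halts with the correct answer. (The decreasing property $K_{n+1}(a) \subset K_n(a)$ is not strictly needed for this direction, but it guarantees that once $z$ is detected outside some $K_{n_0}(a)$ the information is consistent with every later stage, so the algorithm is actually trying a nested sequence of ever tighter enclosures of $\hat{\sigma}(a)$.)

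Finally I would comment on the main point. There is essentially no obstacle here beyond unpacking the definitions: the whole content is that membership in the countable intersection $\bigcap_n K_n(a)$ is witnessed by exclusion from \emph{some} $K_n(a)$, and each such exclusion is, by (v), detectable after finitely many operations of types (i)--(iii). In logical language this is exactly a $\Pi_1$ predicate (``for all $n$, $z \in K_n(a)$'') whose negation is recursively enumerable; hence the problem is semi-decidable, with positive instances of $z \notin \hat{\sigma}(a)$ being the ones verifiable in finite time.
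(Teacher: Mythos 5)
Your argument is correct and is exactly the intended (semi-decidability) reasoning: the paper states this ``metatheorem'' without writing out a proof, treating precisely the loop-and-test argument you give as self-evident. Your observation that the nesting (vi) is not logically needed for termination in the negative case, but only ensures the $K_n(a)$ form ever tighter enclosures (and hence Hausdorff convergence, as the paper's subsequent remark notes), is also accurate.
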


\begin{remark}
Observe that since the  compact sets  are in inclusion, (vii) immediately implies
the convergence in the Hausdorff metric
$$
d_H(K_n(a), \hat{\sigma}(a)) \rightarrow 0.
$$
\end{remark} 

The next result contains an actual construction of such compacts sets $K_n(a)$.

\begin{theorem} {\cite{Nevanlinna}} \label{olemassaolo}
There exists a recursive procedure which  uses only (i)-(iii) and creates (v)-(vii).   The sets  $K_n(a)$ are of the form
\begin{equation}\label{perustesti}
V_{p_n}(a) = \{ z \in \mathbb C \ : \ |p_n(z)| \le \|p_n(a)\| \},
\end{equation}
where $p_n$ are monic polynomials. 
\end{theorem}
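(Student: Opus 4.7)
The strategy is to turn the non-constructive identity $\hat{\sigma}(a)=\bigcap_{p\in\mathbb P^{0}}V_p(a)$ from the preceding theorem into a concrete nested sequence of monic polynomials whose level sets $V_{p_n}(a)$ shrink to $\hat{\sigma}(a)$ while being testable using only the operations (i)-(iii). My first move is to replace the uncountable intersection by a countable one: fix an enumeration $\{q_j\}_{j\in\mathbb N}$ of all monic polynomials with Gaussian rational coefficients. Because the inequality $|p(z)|\le\|p(a)\|$ depends continuously on the coefficients of $p$ and on $z$, this dense subfamily is already sufficient, so that $\bigcap_{j\in\mathbb N}V_{q_j}(a)=\hat{\sigma}(a)$.

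I would then construct $p_n$ recursively, exploiting two elementary facts about how $V_p(a)$ reacts to algebraic operations on $p$. Submultiplicativity $\|p(a)^k\|\le\|p(a)\|^k$ gives the exact inclusion $V_{p^k}(a)\subset V_p(a)$ for every $k\ge 1$, and the spectral radius formula refines this to $V_{p^k}(a)\downarrow\{z:|p(z)|\le\rho(p(a))\}$ as $k\to\infty$. Starting from $p_1=q_1$, at stage $n+1$ I would set $p_{n+1}=(p_n q_{n+1})^{k_{n+1}}$, choosing the exponent $k_{n+1}\in\mathbb N$ large enough---on the basis of the computable norms $\|p_n(a)\|$, $\|q_{n+1}(a)\|$ and $\|(p_n q_{n+1})(a)\|$---to force $V_{p_{n+1}}(a)\subset V_{p_n}(a)$. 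Since only norms accessible through (i)-(ii) enter the selection of $k_{n+1}$, the procedure is genuinely recursive in the required sense. Condition (v) is then immediate (testing $z\in V_{p_n}(a)$ requires $|p_n(z)|$ via (iii) and the scalar $\|p_n(a)\|$ via (i)-(ii)), and (vi) is built in by construction.

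For (vii), the inclusion $\hat{\sigma}(a)\subset V_{p_n}(a)$ is the standard spectral mapping inclusion; the reverse inclusion is where the work lies. I would argue that for each fixed $j$, one has $V_{p_n}(a)\subset V_{q_j}(a)$ from some $n$ onward, by combining the density of $\{q_j\}$ with a spectral radius argument ensuring that the $q_j$-factor embedded in $p_n$ eventually enforces $|q_j(z)|\le\|q_j(a)\|$ on $V_{p_n}(a)$. Intersecting in $n$ then yields $\bigcap_n V_{p_n}(a)\subset\bigcap_j V_{q_j}(a)=\hat{\sigma}(a)$. The hardest part will be the bookkeeping at stage $n+1$: the exponent $k_{n+1}$ must be selected to enforce \emph{both} the strict nesting $V_{p_{n+1}}(a)\subset V_{p_n}(a)$ \emph{and} the eventual absorption of each fixed $V_{q_j}(a)$. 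This requires carefully tracking the interplay of $\rho(p_n(a))$, $\rho((p_n q_{n+1})(a))$ and the sub-multiplicative gap between $\|p_n(a)q_{n+1}(a)\|$ and $\|p_n(a)\|\,\|q_{n+1}(a)\|$ under successive powers---a delicate accounting which, I expect, is the main content of the detailed construction given in \cite{Nevanlinna}.
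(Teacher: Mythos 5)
Your construction breaks at the nesting step (vi), and the failure is structural, not a matter of bookkeeping. For any polynomial $r$ and any $a$, every root $w$ of $r$ satisfies $|r(w)|=0\le\|r(a)\|$, so $w\in V_r(a)$ automatically. Hence if you set $p_{n+1}=(p_nq_{n+1})^{k_{n+1}}$, the set $V_{p_{n+1}}(a)$ contains \emph{all roots of $q_{n+1}$}, no matter how large you take $k_{n+1}$. Since your $q_{n+1}$ runs through an enumeration of all monic polynomials with Gaussian rational coefficients, all but finitely many of them have roots far outside $V_{p_n}(a)$ (e.g.\ $q=z-100$ when $\|a\|=1$), and for those the inclusion $V_{p_{n+1}}(a)\subset V_{p_n}(a)$ is false for every exponent. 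Raising powers only shrinks the threshold $\|p(a)^k\|^{1/k}$ toward $\rho(p(a))$; it can never excise a zero of the polynomial from the lemniscate set. A second, independent gap sits in your absorption argument for (vii): from $|p_n(z)|\le\|p_n(a)\|$ with $p_n=\prod_j q_j^{m_j}$ you only get the product inequality $\prod_j|q_j(z)|^{m_j}\le\prod_j\|q_j(a)\|^{m_j}$, which does not force the individual inequality $|q_j(z)|\le\|q_j(a)\|$ for a fixed $j$ --- a point can violate one factor's bound while compensating on another. So neither (vi) nor (vii) is actually established.

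For comparison: the paper does not prove this theorem but describes the construction of \cite{Nevanlinna} as built from \emph{norm-minimization} over monic polynomials of fixed degree (Chebyshev-type minimizers as in the inequality (\ref{mintest}) of the following theorem), with the remark that ``quite a lot of effort is needed to guarantee that the sets are in inclusion'' --- precisely the difficulty your product trick cannot circumvent. The enumeration idea you start from does appear in the paper, but as the separate ``Algorithm based on enumeration,'' where nesting is obtained not by multiplying polynomials but by \emph{testing set inclusion directly} (the extra operation (ix) on lemniscates) and discarding candidates that are not subsets of the current set; crucially, the convergence of that algorithm is itself deduced \emph{from} Theorem \ref{olemassaolo}, so it cannot be used to prove it. Your opening reduction to a countable dense family of monic polynomials is correct and useful, but the recursive step needs to be replaced by a mechanism that controls where the zeros of $p_{n+1}$ land.
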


Thus, in particular, testing whether $z$ is in or out of $V_p$ is easy.   The   heart of the process is in computing the sequence of polynomials.  For each polynomial the procedure terminates in finite number of steps, but  in addition to (i)-(iii) a "step" contains a finite number of minimization problems on norms of polynomials of fixed degree.   The construction of  the polynomials is rather complicated with quite a lot of effort needed to guarantee that the sets are in inclusion.  As a byproduct, the degrees of the polynomials grow rather rapidly.  Next we present another result, which is weaker in conclusions but much simpler in construction.

\begin{theorem}{\cite{Nevanlinna}}
Let $p_j$ be a monic polynomial such that for all monic polynomials $p$ of degree $j$ one has
\begin{equation}\label{mintest}
\|p_j(a)\| \le \|p(a)\|.  
\end{equation}
Denoting
$$
Z =\bigcap_{j\ge 1}  V_{p_j}(a)
$$
and 
$$
Z_n=\bigcap_{j=1}^{n}  V_{p_j}(a),
$$
we have 
\begin{equation}\label{inkl}
\hat\sigma(a)Ê\subset Z  
\end{equation} 
   and
\begin{equation}\label{getclose}
\sup_{z\in \partial \hat\sigma(a)}  \mathrm{dist}(z, \  \mathbb C\setminus Z_n) \longrightarrow 0, \qquad n \rightarrow \infty.
\end{equation}
\end{theorem}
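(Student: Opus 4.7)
The plan is to handle the two claims separately. For the inclusion $\hat\sigma(a) \subset Z$, I would combine the defining property of the polynomially convex hull with the spectral mapping theorem: for every monic polynomial $p$ and every $z \in \hat\sigma(a)$,
$$|p(z)| \leq \max_{\lambda \in \sigma(a)}|p(\lambda)| = \rho(p(a)) \leq \|p(a)\|,$$
so $z \in V_p(a)$. Applied with $p = p_j$ and intersected over $j$, this yields (\ref{inkl}) at once.

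For the boundary-approximation claim (\ref{getclose}), my plan is to exploit the fact that the algebra-theoretic minimizers $p_j$ behave asymptotically like the classical Chebyshev polynomials of the set $\hat\sigma(a)$. Set $c = \mathrm{cap}(\hat\sigma(a))$ and let $g$ denote the Green's function of $\mathbb{C}\setminus\hat\sigma(a)$ with pole at infinity; this is well defined because $\hat\sigma(a)$ is polynomially convex, so $\mathbb{C}\setminus\hat\sigma(a)$ is connected and unbounded. The approach rests on two ingredients. First, $\|p_j(a)\|^{1/j}\to c$, obtained by sandwiching between $\|p_j\|_{\hat\sigma(a)}^{1/j}\leq \|p_j(a)\|^{1/j}$ (from the inequality of the previous paragraph) and the classical Chebyshev number of $\hat\sigma(a)$. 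Second, a Bernstein--Walsh-type asymptotic $|p_j(w)|^{1/j}\to c\exp(g(w))$ for $w\in\mathbb{C}\setminus\hat\sigma(a)$ off a polar exceptional set. Since $g>0$ on $\mathbb{C}\setminus\hat\sigma(a)$, these together force $|p_j(w)|>\|p_j(a)\|$ for all $j$ beyond some threshold, so $w\notin V_{p_j}(a)$ and hence $w\notin Z_n$ for $n$ large.

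Pointwise escape upgrades to the uniform assertion (\ref{getclose}) through a standard compactness argument: cover the compact set $\partial\hat\sigma(a)$ by finitely many balls of radius $\epsilon$, pick in each a point $w^{(i)}$ in the (accessible) unbounded complement and off the polar exceptional set, and take the maximum of the resulting thresholds. The principal obstacle is establishing the lower half of the Bernstein--Walsh asymptotic for the algebra minimizers $p_j$: the upper estimate $\limsup_j |p_j(w)|^{1/j}\leq c\exp(g(w))$ is immediate from the Bernstein--Walsh inequality together with $\|p_j\|_{\hat\sigma(a)}\leq\|p_j(a)\|$, but the matching lower estimate requires exploiting the extremality of $p_j$ via a potential-theoretic (Fekete--Leja-type) argument, which I would invoke from \cite{Nevanlinna_book}.
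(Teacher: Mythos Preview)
The paper does not supply a proof of this theorem; it is stated with the citation \cite{Nevanlinna} and followed only by a remark on notation, so there is no in-paper argument to compare against. Your argument for the inclusion $\hat\sigma(a)\subset Z$ is correct and is precisely the mechanism the paper invokes a few lines earlier (spectral mapping plus $\rho(p(a))\le\|p(a)\|$).

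For (\ref{getclose}) your potential-theoretic outline is the right strategy, but two points are not yet in place. First, the ``sandwich'' you describe only yields the \emph{lower} bound $\liminf_j\|p_j(a)\|^{1/j}\ge c$: the chain $\|p_j\|_{\hat\sigma(a)}\le\|p_j(a)\|$ together with $\|p_j\|_{\hat\sigma(a)}^{1/j}\ge t_j(\hat\sigma(a))^{1/j}\to c$ says nothing about an upper bound. What is missing is the spectral radius formula: for any fixed monic $q$ of degree $d$, the extremality of $p_{kd}$ gives $\|p_{kd}(a)\|\le\|q(a)^k\|$, whence $\limsup_k\|p_{kd}(a)\|^{1/(kd)}\le\rho(q(a))^{1/d}=\|q\|_{\sigma(a)}^{1/d}$; minimising over $q$ and $d$ then produces $\limsup_j\|p_j(a)\|^{1/j}\le c$. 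Second, your comparison ``$c\exp(g(w))>c$, hence eventually $|p_j(w)|>\|p_j(a)\|$'' collapses when $c=\mathrm{cap}(\hat\sigma(a))=0$ (e.g.\ $a$ quasinilpotent or $\sigma(a)$ finite): both sides of the limiting inequality are then~$0$ and the Green's function is not defined in the usual sense. This case needs a separate, in fact simpler, argument: since $\mathrm{cap}(V_{p_j}(a))=\|p_j(a)\|^{1/j}\to 0$, no $V_{p_j}(a)$ with $j$ large can contain a fixed disc $B(z,\epsilon)$ of capacity $\epsilon>0$, which already gives $\mathrm{dist}(z,\mathbb{C}\setminus Z_n)<\epsilon$ for large $n$.
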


Above we denote  $\mathrm{dist}( z, W) = \inf_{w\in W} |z-w|$. Observe that the conclusion (\ref{getclose}) is weaker than convergence in the Hausdorff metric
$$
d_H(\hat{\sigma}(a), Z_n) \longrightarrow 0, \qquad n \rightarrow \infty.
$$
 We can  simplify the statements above, based on  Theorem  \ref{olemassaolo}.  In fact, since there  is a constructive way to create a sequence of polynomials  $\{p_n\}$ such that    the sets  $\{V_{p_n}(a)\}$   are in inclusion and satisfy
\begin{equation}
\bigcap_{n\ge1}V_{p_n}(a) = \hat{\sigma}(a),
\end{equation}
we could in principle do the following:  enumerate all polynomials with  coefficients having rational real and imaginary parts.   Start computing the   sets $V_p(a)$ and keep in memory only the smallest set so far computed.   By Theorem \ref{olemassaolo} this trivial (but of course impractically slow) procedure creates  a converging sequence and the essential requirements are:
 
\begin{itemize}
\item[(viii)]  
one is given the enumeration of  monic  polynomials with rational coefficients,
\item[(ix)]  one can test whether $V_{p}(a) \subset V_{q}(a)$ for any
  two polynomials $p$ and $q$.

\end{itemize}
Observe that  since the boundaries of $V_p(a)$ are given by lemniscates (ix) reduces to checking whether given lemniscates   intersect and if not, each one has only finitely many components and one has to check which one is  subset of what.  Clearly all this is relatively easy to do.   We summarize:

\begin{algorithm}[{\bf based on enumeration}]
Assume (i)-(iii) and (viii)-(ix).
\begin{itemize}
\item Find an enumeration $\{p_n\}_{n\in \mathbb{N}}$
  of all monic polynomials with rational coefficients.
\item Compute $V_{p_1}(a)$ and set $K_1(a)=V_{p_1}(a)$.
\item When $K_m(a)$ has been defined with $K_m(a)=V_{p_{n_m}}(a)$  keep computing $V_{p_n}(a)$ for $n=n_m+1, \dots$ and define $K_{m+1}(a)$ when you first time meet a subset of $K_m(a)$.
\end{itemize}
\end{algorithm}

\begin{theorem}
The Algorithm based on enumeration  satisfies  the assumptions of  Proposition \ref{meta}.   
\end{theorem}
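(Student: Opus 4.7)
The plan is to verify conditions (v), (vi), (vii) of Proposition~\ref{meta} for the sets $K_m(a)$ produced by the algorithm.

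Conditions (v) and (vi) should be essentially immediate: each $K_m(a)=V_{p_{n_m}}(a)$ is the sublevel set of a single polynomial, so testing $z\in V_p(a)$ reduces to comparing $|p(z)|$ with $\|p(a)\|$ using only (i)--(iii); and the very definition of $K_{m+1}(a)$ in the algorithm gives $K_{m+1}(a)\subset K_m(a)$. A small preliminary worth dispatching early is that the algorithm never stalls, which follows from the observation $V_{p_{n_m}^{k}}(a)\subset V_{p_{n_m}}(a)$ for every $k\ge 1$ (using $\|p_{n_m}(a)^k\|^{1/k}\le\|p_{n_m}(a)\|$ by submultiplicativity): the powers $p_{n_m}^{k}$ produce infinitely many distinct rational monic polynomials whose $V$-sets sit inside $K_m(a)$, and at least one appears past position $n_m$ in the enumeration.

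For (vii), the inclusion $\hat{\sigma}(a)\subset\bigcap_m K_m(a)$ is immediate from the spectral mapping theorem, so the work lies in showing $\bigcap_m K_m(a)\subset\hat{\sigma}(a)$. My starting point would be Theorem~\ref{olemassaolo}, which supplies a decreasing sequence of monic-polynomial sublevel sets whose intersection is $\hat{\sigma}(a)$. Using continuity of $p\mapsto V_p(a)$ in the coefficients of $p$ together with density of monic polynomials with rational real and imaginary coefficients, one rationalizes this sequence to obtain rational monic $(\tilde r_n)$ with $V_{\tilde r_n}(a)\to\hat{\sigma}(a)$ in the Hausdorff metric. I would then argue by contradiction: if $z_0\in\bigcap_m K_m(a)\setminus\hat{\sigma}(a)$, Theorem~\ref{olavinlause} plus rational approximation produces a rational monic $\tilde q$ with $z_0\notin V_{\tilde q}(a)$. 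Combining $\tilde q$ with the current $p_{n_m}$ via the product-closure observation
\[
V_p(a)\subset V_q(a)\ \Longrightarrow\ V_{p\cdot q^k}(a)\subset V_q(a)\quad\text{for every }k\ge 0
\]
(proved by a two-case split on whether $|p(z)|\le\|p(a)\|$), one manufactures rational monic polynomials whose $V$-sets simultaneously lie inside $K_m(a)$ and omit $z_0$. Because the enumeration eventually meets these polynomials, the greedy rule of the algorithm must at some stage select one of them (or an even smaller subset), yielding a $K_{m+1}(a)$ that does not contain $z_0$---contradicting $z_0\in\bigcap_m K_m(a)$. The remark following Proposition~\ref{meta} then automatically upgrades the set equality to $d_H(K_m(a),\hat{\sigma}(a))\to 0$.

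The hard part will be the combination step just described: a generic rational approximant $\tilde q$ of the complex-coefficient polynomial from Theorem~\ref{olemassaolo} need not satisfy $V_{\tilde q}(a)\subset K_m(a)$ unless $\hat{\sigma}(a)$ lies strictly in the interior of $K_m(a)$, which may fail when $K_m(a)$ and $\hat{\sigma}(a)$ share boundary points (the boundary $\partial V_{p_{n_m}}(a)$ is a lemniscate and may be tangent to $\hat{\sigma}(a)$). To circumvent this I would iterate the rationalization---starting from the rational polynomial $p_{n_m}$ already defining $K_m(a)$ and invoking the recursive construction of Theorem~\ref{olemassaolo}---so that each rationalization step absorbs its perturbation into the polynomial chosen at the previous stage, preserving the inclusion into $K_m(a)$ throughout.
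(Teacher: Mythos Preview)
The paper states this theorem without a formal proof; its justification is the informal paragraph preceding the algorithm, which simply invokes Theorem~\ref{olemassaolo} together with density of monic polynomials having rational coefficients. Your proposal follows exactly that line but goes further by flagging the genuine obstacle the paper glosses over: the greedy update rule accepts only subsets of the current $K_m(a)$, and a rational approximant $\tilde q$ of a polynomial coming from Theorem~\ref{olemassaolo} need not satisfy $V_{\tilde q}(a)\subset K_m(a)$, particularly when $\hat\sigma(a)$ meets $\partial K_m(a)$.

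That said, your proposed remedy is not yet complete. The product-closure implication you display does not by itself manufacture the polynomial you need: to apply it with $q=p_{n_m}$ you would require $V_{\tilde q}(a)\subset K_m(a)$, which is precisely what is in doubt; with the roles reversed you would need $K_m(a)\subset V_{\tilde q}(a)$, which fails since $z_0\in K_m(a)\setminus V_{\tilde q}(a)$. The sentence ``the greedy rule of the algorithm must at some stage select one of them (or an even smaller subset)'' is likewise unjustified as stated: before reaching your $\tilde q$ in the enumeration the algorithm may have selected some $K_{m'}(a)\subsetneq K_m(a)$ that still contains $z_0$ yet no longer contains $V_{\tilde q}(a)$, after which $\tilde q$ is simply skipped. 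Your final paragraph correctly identifies this as the hard part and points in the right direction---one needs a refinement step that takes the current \emph{rational} $p_{n_m}$ as input and produces a rational monic $r$ with $V_r(a)\subset V_{p_{n_m}}(a)$ and $z_0\notin V_r(a)$---but the mechanism for doing so while staying rational at every stage remains to be supplied, and is precisely the content of the construction in \cite{Nevanlinna} underlying Theorem~\ref{olemassaolo}. In short: your plan coincides with the paper's (sketchy) argument, you correctly locate the gap, but closing it requires more than the product observation you record.
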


\begin{remark}\label{haave}
If the norm of any element in the algebra can be computed with a countable number of elementary steps, then  if the solvability complexity index would be defined for this type of general algorithm, the index would be at most 2.
\end{remark}

\section{Representing the resolvent}
We shall now assume that  we have a  monic polynomial $p$, which could come from one of the procedures  discussed in the previous section.
Suppose
$$
z \notin V_p(a)
$$
where $V_p(a)$ is given by (\ref{perustesti}).  
Suppose  $p\in \mathbb P_d$ is of the form
\begin{equation}\label{poly}
p(z)=z^d+ \alpha_1 z^{d-1}+ \cdots+ \alpha_d.
\end{equation}
We  put for $j=0, 1, \dots, d-1$ 
\begin{equation}\label{auxpol}
Q_j(z) =z^j+ \alpha_1 \lambda^{j-1}+ \cdots+ \alpha_j
\end{equation}
and then, with $a\in \mathcal {A},$
\begin{equation}\label{kuu}
q(z,a)=\sum_{j=0}^{d-1} Q_{d-1-j}(z) a^j.
\end{equation}
One checks easily that then
\begin{equation}\label{faktori}
(a-z)q(z,a) = p(a)-p(z).
\end{equation}
Since we assume (\ref{perustesti}),  $p(a)-p(z)$ has an inverse in the form of a convergent power series and  we obtain from (\ref{faktori}) a representation for  the resolvent in an explicit form:

\begin{proposition}
Let $a \in \mathcal{A}$ and $z \notin V_p(a)$, then 
\begin{equation}\label{repr}
(z-a)^{-1} =  \frac{q(z,a)}{p(z)} \sum_{j=0}^\infty  \frac{p(a)^j}{p(z)^j}.
\end{equation}
\end{proposition}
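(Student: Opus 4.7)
The plan is to turn the algebraic factorization (\ref{faktori}) into an explicit inverse by plugging in a Neumann series for $(p(z)-p(a))^{-1}$, which is available precisely because $z\notin V_p(a)$.

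First I would record that the hypothesis $z\notin V_p(a)$ means
\[
\left\|\frac{p(a)}{p(z)}\right\| = \frac{\|p(a)\|}{|p(z)|} < 1.
\]
In a unital Banach algebra this gives absolute convergence of the Neumann series, so $e - p(a)/p(z)$ is invertible with
\[
\left(e - \frac{p(a)}{p(z)}\right)^{-1} = \sum_{j=0}^{\infty} \frac{p(a)^j}{p(z)^j}.
\]
Multiplying by $1/p(z)$ and using that $p(z)-p(a)$ is a scalar-times-unit minus $p(a)$, we get
\[
(p(z)-p(a))^{-1} = \frac{1}{p(z)} \sum_{j=0}^{\infty} \frac{p(a)^j}{p(z)^j}.
\]

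Next I would rewrite the factorization (\ref{faktori}) as
\[
(z-a)\,q(z,a) = p(z) - p(a),
\]
which I regard as the key algebraic identity; it is simply the polynomial identity $p(w)-p(z) = (w-z)\sum_{j=0}^{d-1} Q_{d-1-j}(z) w^j$ evaluated at $w=a$, and it makes sense verbatim in $\mathcal{A}$ because only powers of $a$ appear. Since $q(z,a)$ is a polynomial in $a$, it commutes with $p(a)$ and hence with $(p(z)-p(a))^{-1}$. Multiplying the identity above on the right by $(p(z)-p(a))^{-1}$ therefore yields
\[
(z-a)\,q(z,a)\,(p(z)-p(a))^{-1} = e,
\]
and by commutativity the same expression also equals $q(z,a)(p(z)-p(a))^{-1}(z-a)$. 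This shows that $q(z,a)(p(z)-p(a))^{-1}$ is a two-sided inverse of $z-a$, so
\[
(z-a)^{-1} = q(z,a)\,(p(z)-p(a))^{-1} = \frac{q(z,a)}{p(z)} \sum_{j=0}^{\infty} \frac{p(a)^j}{p(z)^j},
\]
which is the claimed formula.

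There is no real obstacle; the only point that deserves a moment of care is the commutativity argument, i.e.\ verifying that left and right inverses coincide. This is immediate because $q(z,a)$, $p(a)$ and hence $(p(z)-p(a))^{-1}$ all lie in the closed (commutative) subalgebra generated by $e$ and $a$, so all the factors in the displayed products commute and one does not have to worry about sidedness when manipulating $(z-a)^{-1}$. The convergence of the resulting series is automatic from the strict inequality $\|p(a)\|<|p(z)|$.
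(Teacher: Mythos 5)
Your proof is correct and follows exactly the route the paper takes: the factorization $(a-z)q(z,a)=p(a)-p(z)$ combined with the Neumann series for $(p(z)-p(a))^{-1}$, which converges because $z\notin V_p(a)$ gives $\|p(a)\|<|p(z)|$. The paper leaves the inversion and the two-sidedness implicit, so your explicit commutativity check is a harmless elaboration of the same argument rather than a different approach.
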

What is remarkable here is that this {\it single} representation
works everywhere outside a computed set $V_p(a)$ and one does {\it
  not} have to know the spectrum of $a\in \mathcal A$ in order  to
apply it.   
\begin{example}
Suppose you want to  compute $\log (a)$.  The approach requires the
following. You first run  some version of the algorithms of producing  polynomials, e.g. the one in Theorem \ref{olemassaolo}, and you keep checking whether  
\begin{equation}\label{onkologolemassa}
0\notin V_p(a).
\end{equation}
If this never happens, it means that either $0\in \sigma(a)$ or  the
spectrum separates 0 from $\infty$, and in these cases $\log(a)$ cannot
be consistently defined.   If, on the other hand,
(\ref{onkologolemassa}) holds for some $p$ then, as $V_p(a)$ is
polynomially convex, one could cut the plane from $0$  to $\infty$
outside of $V_p(a)$ and thus have a single valued logarithm, say
$\mathrm{Log}(z)$. Let $\gamma$ be a contour around $V_p(a)$ such that $\mathrm{Log}(z)$
is well defined along it. 
Then one can simply use the Cauchy integral formula to obtain
\begin{equation}\label{logsarja}
\log(a) = \sum_{j=0} ^\infty c_j(a) p(a)^j
\end{equation}
where
\begin{equation}
c_j(a) = \frac{1}{2\pi i}  \int_\gamma \mathrm{Log}(z) \frac{q(z,a)}{p(z)^{j+1}}dz.
\end{equation} 
\end{example}
Observe that  the representation of the resolvent gives rational approximations if  we truncate the series expansion. Thus the  approximations are analytic and  the values of the integrals   are path independent.  This is in contrast to  more standard approximations of the resolvent which are often created around selected  discrete points along $\gamma$.  Then the local approximations are typically only piecewise analytic, at best, and one  continues to compute the integrals using suitable quadrature formulas.  Here the integrals can be evaluated using the residue calculus {\it at the zeros of the polynomial} $p$. We shall now shortly  discuss the resulting algorithmic approach for  holomorphic functional calculus.

\section{Multicentric decompositions and holomorphic calculus}
Let $f$ be analytic in a domain $\Omega \subset \mathbb C$.  We assume the following on $f$ and $\Omega$.  Given a monic polynomial $p$  with roots $z_j \in \Omega$ and an element $a$ in a Banach algebra
\begin{itemize}
\item[(x)]
we can test whether $V_p(a) \subset \Omega$,
\item[(xi)]
we can evaluate the derivatives of all orders of $f$ at  the roots $z_j$.
\end{itemize}
 The algorithmic approach   starts by   searching  for a polynomial $p$ satisfying
\begin{equation}
V_p(a) \subset \Omega.
\end{equation}\label{vaatimus}  
By Theorem \ref{olemassaolo}  such a polynomial $p_n$ is found after a finitely many steps if and only if 
\begin{equation}\label{inkluusio}
\hat{\sigma}(a)\subset \Omega.
\end{equation}
In fact,  if  $U$  is open such that
$$
\bigcap_{j\ge 1} K_j \subset U,
$$
where the compact sets $K_j$ are in inclusion, then there exists also an integer $n$ such that
$$
K_n \subset U.
$$
Assume then that  a monic $p$ of degree $d$ has been found, satisfying (\ref{vaatimus}). We may further assume that $p$ has only simple zeros. Denote by $\delta_k(z)$ the interpolating polynomial of degree $d-1$:
$$
\delta_k(z) =\frac{p(z)}{p'(z_k)(z-z_k)}.
$$
One checks easily that $q$ in (\ref{kuu}) can be written as
\begin{equation}
q(z,a) = \sum_{k=1}^d p'(z_k) \delta_k(a) \delta_k(z).
\end{equation}
We could  represent $f$ in a {\it multicentric power series} or {\it Jacobi series}
as in (\ref{logsarja})
\begin{equation}\label{jacobisarja}
f(z)=\sum_{j=0}^\infty   c_j(z) p(z)^j,
\end{equation}
where
the coefficients $c_j$ are polynomials of at most degree $d-1$.  However,
we prefer  to proceed to the {\it multicentric decomposition} of $f$.  Given $f$ there are unique analytic functions $f_k$ such that
\begin{equation}\label{multidecompo}
f(z)= \sum_{k=1}^d \delta_k(z) f_k(p(z)).
\end{equation}

\begin{proposition}
If $\gamma$ is a contour surrounding all zeros $z_k$ and staying inside $\Omega$ then $f\in H(\Omega)$ has a unique multicentric decomposition (\ref{multidecompo}), where
each function $f_k$ has a power series  
$$
f_k(w)=\sum_{j=0}^\infty \alpha_{kj} w^j
$$ with
$$
\alpha_{kj} = \frac{1}{2\pi i}Ê\int_\gamma \frac{f(z)}{z-z_k}\frac{dz}{p(z)^j}.
$$
\end{proposition}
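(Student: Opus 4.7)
The plan is to derive both existence and the coefficient formula directly from Cauchy's integral formula, combined with the factorization identity (\ref{faktori}) for the polynomial $q(z,a)$. For $z$ inside $\gamma$ with $|p(z)| < \min_{\zeta \in \gamma}|p(\zeta)|$, setting the algebra element $a = \zeta$ in (\ref{faktori}) turns it into
\[
\frac{1}{\zeta - z} \;=\; \frac{q(z, \zeta)}{p(\zeta) - p(z)} \;=\; \frac{q(z, \zeta)}{p(\zeta)} \sum_{j=0}^{\infty} \frac{p(z)^j}{p(\zeta)^j},
\]
with the geometric series converging uniformly in $\zeta \in \gamma$. Substituting into Cauchy's formula and interchanging sum and integral would yield the Jacobi series (\ref{jacobisarja}),
\[
f(z) = \sum_{j=0}^{\infty} c_j(z)\, p(z)^j, \qquad c_j(z) = \frac{1}{2\pi i} \int_\gamma \frac{f(\zeta)\, q(z, \zeta)}{p(\zeta)^{j+1}}\, d\zeta,
\]
with each $c_j$ a polynomial of degree at most $d-1$ in $z$, since $q(\cdot,\zeta)$ is.

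Next I would pass from the Jacobi form to the multicentric form by Lagrange interpolation at the nodes $z_1, \ldots, z_d$: any polynomial $c_j$ of degree $\leq d-1$ satisfies $c_j(z) = \sum_{k=1}^d c_j(z_k)\, \delta_k(z)$. Setting $\alpha_{kj} := c_j(z_k)$ and swapping the two sums gives
\[
f(z) = \sum_{k=1}^d \delta_k(z)\, f_k(p(z)), \qquad f_k(w) := \sum_{j=0}^{\infty} \alpha_{kj}\, w^j,
\]
which is the asserted decomposition. The coefficient formula then follows by specializing $z = z_k$ inside the integrand for $c_j$: since $p(z_k) = 0$, the factorization identity gives $q(z_k, \zeta) = (p(\zeta) - p(z_k))/(\zeta - z_k) = p(\zeta)/(\zeta - z_k)$, whence
\[
\alpha_{kj} = \frac{1}{2\pi i} \int_\gamma \frac{f(\zeta)\, p(\zeta)}{(\zeta - z_k)\, p(\zeta)^{j+1}}\, d\zeta = \frac{1}{2\pi i} \int_\gamma \frac{f(\zeta)}{\zeta - z_k} \frac{d\zeta}{p(\zeta)^{j}}.
\]
A routine bound of $|\alpha_{kj}|$ by the length of $\gamma$ times the supremum of the integrand shows that $f_k$ is holomorphic on the disc $\{|w| < \min_{\zeta \in \gamma}|p(\zeta)|\}$.

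For uniqueness I would use that every zero $z_k$ is simple, so $p'(z_k) \neq 0$ and $p$ admits a local holomorphic inverse $\psi_k$ with $\psi_k(0) = z_k$. If $\sum_k \delta_k(z)\, g_k(p(z)) \equiv 0$ near $z_k$ for some holomorphic $g_k$, substituting $z = \psi_k(w)$ gives $g_k(w) + \sum_{j \neq k} \delta_j(\psi_k(w))\, g_j(w) \equiv 0$; since $\delta_j(z_k) = 0$ for $j \neq k$, evaluating at $w = 0$ forces $g_k(0) = 0$, and a standard induction on the Taylor order at $w = 0$ forces all coefficients of $g_k$ to vanish.

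The main obstacle I anticipate is controlling the domain of validity: the geometric expansion only converges on the sub-level set $\{|p(z)| < \min_\gamma |p|\}$, which is generally a proper subset of $\Omega$. However, by Cauchy's theorem the integrals defining $\alpha_{kj}$ depend only on the homotopy class of $\gamma$ in $\Omega \setminus \{z_1, \ldots, z_d\}$, so $\gamma$ may be pushed outward arbitrarily close to $\partial \Omega$; exhausting $\Omega$ by such sub-level sets and invoking the identity principle extends the decomposition to all of $\Omega$ where $p(z)$ remains within the joint radius of convergence.
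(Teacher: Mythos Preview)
The paper does not actually prove this proposition; it simply states the result and refers to \cite{Nevanlinna_10} for details. So there is no in-paper argument to compare against.

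That said, your approach is correct and is exactly the one the surrounding text of the paper is pointing toward: the resolvent representation (\ref{repr}) specialised to scalar $a=\zeta$ gives the kernel expansion you wrote, Cauchy's formula then yields the Jacobi series (\ref{jacobisarja}), and Lagrange interpolation of the degree-$(d-1)$ coefficients $c_j$ at the nodes $z_k$ converts it into the multicentric form (\ref{multidecompo}). Your computation of $\alpha_{kj}$ via $q(z_k,\zeta)=p(\zeta)/(\zeta-z_k)$ is clean and matches the stated formula, and your observation that the contour can be deformed within $\Omega$ to enlarge the sub-level set $\{|p(z)|<\min_\gamma|p|\}$ handles the domain of validity correctly.

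One small slip in the uniqueness argument: after substituting $z=\psi_k(w)$ you wrote the relation as $g_k(w)+\sum_{j\neq k}\delta_j(\psi_k(w))g_j(w)\equiv 0$, which implicitly uses $\delta_k(\psi_k(w))=1$; that identity only holds at $w=0$. The correct statement is $\sum_{j=1}^d \delta_j(\psi_k(w))\,g_j(w)\equiv 0$, and then evaluating at $w=0$ still gives $g_k(0)=0$ since $\delta_j(z_k)=\delta_{jk}$. The induction you sketch then goes through cleanly: once all $g_k(0)=0$, write $g_k(w)=w\,h_k(w)$, cancel the common factor $p(z)$, and repeat. This is a cosmetic fix, not a gap.
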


 Since $f_k$ has an explicit Taylor series at the origin, it is easy to come up with effective estimates for the coefficients  allowing one to control the accuracy in truncating the series.  
 We refer to \cite{Nevanlinna_10}  for details, but remark at the end that the coefficients can be computed recursively. Also, \cite{Nevanlinna_10}  contains references to the history of these expansions.  It is clear that we could also write the expressions in the form
\begin{equation}\label{jacobiexp}
f(z)= \sum_{j=0} ^\infty c_j(z) p(z)^j
\end{equation}
where $c_j$ are polynomials of degree at most $d-1$. Some authors call (\ref{jacobiexp}) Jacobi series and, 
in fact, C. G. J. Jacobi  \cite{Jacobi}
studied such expansions, without  the use of the Cauchy integral.

\begin{proposition}  \cite{Nevanlinna_10} Based on $p$ one can pre compute coefficients such that the following holds. 
Set first $\alpha_{k0}= f(z_k)$.  Then 
each $\alpha_{kj}$ can be computed by explicit substitution requiring the following input: $f^{j}(z_k),$ and already computed coefficients $\alpha_{m,i}$ for $m=1,\dots,d$, $i=0, \dots, j-1$.

\end{proposition}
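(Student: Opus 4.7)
The plan is to Taylor-expand both sides of the multicentric decomposition \eqref{multidecompo} at each node $z_k$, match coefficients of $(z-z_k)^j$, and solve for $\alpha_{k,j}$ recursively in $j$.

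First I would dispose of the base case: evaluating \eqref{multidecompo} at $z=z_k$ and using the Lagrange property $\delta_m(z_k)=\delta_{mk}$ together with $p(z_k)=0$ collapses the sum to $f(z_k)=f_k(0)=\alpha_{k,0}$, giving the initialization $\alpha_{k,0}=f(z_k)$.

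For the recursion, fix $k$ and note that, since $z_k$ is a simple zero of $p$, we have $p(z)=(z-z_k)\,g_k(z)$ with $g_k(z_k)=p'(z_k)\neq 0$. Substituting the series $f_m(p(z))=\sum_{i\ge 0}\alpha_{m,i}\,p(z)^i$ into \eqref{multidecompo} produces a double sum over $(m,i)$. The key order-counting is that $\delta_m(z)\,p(z)^i$ vanishes at $z_k$ to order exactly $i$ when $m=k$ (because $\delta_k(z_k)=1$) and to order at least $i+1$ when $m\ne k$ (because $\delta_m(z_k)=0$, with $\delta_m$ itself having a simple zero at $z_k$ via the factor $z-z_k$ in the numerator). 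Consequently the only $(m,i)$ with $i\ge j$ that can contribute to the $(z-z_k)^j$-coefficient on the right-hand side is $(m,i)=(k,j)$, and its contribution is exactly $\alpha_{k,j}\,p'(z_k)^j$; every other contribution is some $\alpha_{m,i}$ with $i<j$ multiplied by a coefficient that depends only on the Taylor data of $p$ and $\delta_m$ at $z_k$.

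Matching the left-hand side coefficient $f^{(j)}(z_k)/j!$ then yields the explicit recursion
\begin{equation*}
\alpha_{k,j} \;=\; \frac{1}{p'(z_k)^j}\left(\frac{f^{(j)}(z_k)}{j!} - R_{k,j}\right),
\end{equation*}
where $R_{k,j}$ is a linear combination of $\{\alpha_{m,i} : m\in\{1,\dots,d\},\; 0\le i<j\}$ whose coefficients are polynomial expressions in the numbers $p^{(\ell)}(z_k)$ and $\delta_m^{(\ell)}(z_k)$ and can be tabulated once and for all from $p$. This is the claimed explicit substitution, and the nonvanishing of the leading coefficient $p'(z_k)^j$ is exactly the simple-root hypothesis. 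The main obstacle is not conceptual but bookkeeping: to turn the above order-counting argument into a clean formula for $R_{k,j}$ one must express the Taylor coefficients of $\delta_m(z)\,p(z)^i$ at $z_k$ explicitly in terms of derivatives of $\delta_m$ and $p$, most economically via Faà di Bruno applied to $f_m\circ p$ combined with the Leibniz rule for the product $\delta_m\cdot(f_m\circ p)$. The resulting combinatorial constants, together with the numerical values $p^{(\ell)}(z_k)$ and $\delta_m^{(\ell)}(z_k)$, are precisely the quantities to be \emph{pre-computed} from $p$, as claimed.
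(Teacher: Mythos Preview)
The paper does not supply a proof of this proposition; it is stated with a citation to \cite{Nevanlinna_10} and no argument is given. Your proposal is therefore not being compared against anything in the paper itself.

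That said, your argument is correct and is the natural one. The order-counting is the heart of the matter and you have it right: near $z_k$ the term $\delta_m(z)p(z)^i$ vanishes to order exactly $i$ when $m=k$ and to order exactly $i+1$ when $m\neq k$ (since $\delta_m$ for $m\neq k$ has a \emph{simple} zero at $z_k$, being $\prod_{l\neq m}(z-z_l)/p'(z_m)$). Hence in the $(z-z_k)^j$-coefficient of the right-hand side the only term involving an $\alpha_{\cdot,\cdot}$ with second index $\ge j$ is $\alpha_{k,j}$, and it enters with the nonzero factor $p'(z_k)^j$. Matching against $f^{(j)}(z_k)/j!$ gives the recursion you wrote, with $R_{k,j}$ a fixed linear form in the lower coefficients whose weights are Taylor data of $p$ and the $\delta_m$ at $z_k$---all pre-computable from $p$ alone. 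The invocation of Fa\`a di Bruno and Leibniz is a reasonable way to make the weights explicit, though for the proposition as stated you do not actually need closed formulas for them, only their existence and dependence on $p$.
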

Thus,  after the polynomial $p$ has been fixed  one obtains a countable expression for $f(a)$ such that each of the truncated expressions can be computed with a finite number of operations.  

\begin{remark}  
The multicentric  calculus above can be applied for bounded operators $T\in B(\mathcal X)$  to compute $f(T)x$ for holomorphic $f$ and a vector $x\in \mathcal X$  in such a way that  one has to create only vectors $T^j x$ {\it provided} one can  be sure that 
$$
V_p(T)  \subset \Omega.
$$
Notice that this requires   testing  $|p(z)|$ against $\|p(T)\|$, whereas testing against $\|p(T)x\|$ does not suffice. 
 \end{remark}

 \section{Open Problems}

The great open problem is to get a complete classification theory of the Solvability Complexity Index. 
However, before we can get that we must solve the following problem: Let 
$
\mathcal{E} = \mathcal{B}(\mathcal{H})
$
and $\Xi:\mathcal{E} \rightarrow \mathcal{F}$ 
be defined by $\Xi(T) = \sigma(T)$. Is
$$
\mathrm{SCI}(\Xi) > 1?
$$
We will not speculate on what kinds of techniques one should use to answer this question, but rather 
engage in some philosophical thinking about the problem. What if  $\mathrm{SCI}(\Xi) = 1$?
That means that there exists an (as of today unknown) construction that allows us to recover the spectrum 
of any operator by using only arithmetic operations and radicals and then taking {\it one} limit. Such a result would indeed be spectacular, however, a little too good to be true.

What if  $\mathrm{SCI}(\Xi) > 1$? If that is the case, it means that the Solvability Complexity Index makes 
a jump going from finite to infinite dimensions, similar to the jump between dimensions four and five.
But where does the jump occur? We have already shown that, at least, the jump cannot occur between 
finite rank operators and compact operators. Also, could it be that   $\mathrm{SCI}(\Xi) > 2$? In that case
Theorem \ref{bounded} automatically gives us that $\mathrm{SCI}(\Xi) = 3$. However, in this case there will be not only one jump, but two, and where will they occur?

We hope to report on these issues in the future, however, note that to show that the Solvability Complexity Index of the spectrum makes a jump between dimensions four and five is equivalent to showing the unsolvability of the quintic using radicals. This was done by using tools outside of analysis and we will not exclude the possibility that deep mathematics from other fields will yield the solution to the problem.
 
 The results above which are based on the  polynomial numerical hull techniques are written in  a way which does not automatically relate to the solvability complexity index as there are operations which are defined on higher level of abstraction.  However, as pointed out in  Remark \ref{haave}, if we would have effective ways to control the computation of 
$$
T \mapsto \|p(T)\|
$$
then we  would have a different  way of addressing these questions.

 \section{Addendum}
 Up to this section the paper is unchanged, modulo small corrections, from its original form that was put online in 2011. Since the conference {\it Operator Theory and Applications: Perspectives and Challenges} (2010), where some of these ideas were presented, and that was the starting point of this paper, there have been substantial developments in the field. For example, in \cite{Seidel} the techniques for computing spectra and $(n,\epsilon)$-pseudospectra discussed in Section \ref{compute} were extended to Banach space operators. Moreover, in \cite{Olavi_JFA} the multicentric calculus was used as a basis for  holomorphic functional calculus, allowing a nontrivial extension of the von Neumann theorem that for polynomials  $p$  and contractions $A$ in a Hilbert space  $\mathcal{H}$  we  have
$$
\| p(A) \| \leq  \sup_ {|z| \leq 1}  |p(z)|.
$$
Furthermore, in \cite{Olavi_Oper}, to get a functional calculus for nonholomorphic functions working for all square matrices,  a Banach algebra was constructed starting from continuous functions 
$$
f :  M \subset \mathbb C \rightarrow  \mathbb C^d
$$
and formulating a product $\circledcirc$  such that the multicentric representation   of a scalar function $\varphi$ could be viewed as the Gelfand transformation  of $f$ in that algebra. That then led  to a natural functional calculus, where for example,  differentiability of the function $\varphi$ is not necessary at  eigenvalues corresponding to nontrivial Jordan blocks.

Finally, in \cite{SCI, SCI2}, and many of the open problems have been solved. For example, expressed informally, we have the following:
 \begin{itemize}
 \item[(i)] The SCI of computing spectra of operators in $\mathcal{B}(l^2(\mathbb{N}))$ is equal to $3$.
  \item[(ii)] The SCI of computing spectra of self-adjoint and normal operators in $\mathcal{B}(l^2(\mathbb{N}))$ are  both equal to $2$.
   \item[(iii)] The SCI of computing spectra of banded normal operators in $\mathcal{B}(l^2(\mathbb{N}))$ is equal to $1$, however computing the essential spectrum has SCI equal to $2$.
 \end{itemize}
 Moreover, these results are true also when one removes the possibility of using radicals.  In addition, the concept of the SCI can be generalized to arbitrary computational problems. 
 
\subsection{The SCI hierarchy and Smale's program}
In the 1980's S. Smale initiated  \cite{Smale2, Shub_Smale_JAMS, Smale_Acta_Numerica} a comprehensive program for establishing the foundations of computational mathematics. In particular, he started by asking basic questions on the existence of algorithms. One of the fundamental algorithms in numerical analysis is Newtons method. However, the problem is that it may not converge, an issue that causes trouble in for example polynomial root finding. 
Thus, Smale \cite{Smale2} questioned whether there exists an alternative to Newton's method, namely, a purely iterative generally convergent algorithm (see \cite{Smale2} for definition). More precisely he asked: ``\emph{Is there any
purely iterative generally convergent algorithm for polynomial zero
finding ?}"   The problem was settled by C. McMullen in \cite{McMullen1} as follows: yes, if the degree is three; no, if the degree is higher (see also \cite{McMullen2}). However, in \cite{Doyle_McMullen} P. Doyle and C. McMullen demonstrated a fascinating phenomenon: this problem can be solved in the case of the quartic and the quintic using several limits. In particular, they provide a construction such that, by using several rational maps and independent limits, a root of the polynomial can be computed. In other words, this is an example of a problem where the SCI is greater than one, but still finite. 

It turns out that this phenomenon happens everywhere in scientific computing and Smale's questions and McMullens answers are indeed an example of classification problems in the SCI hierarchy that can be described as follows. Given a definition of what an algorithm can do, we have that
\begin{itemize}
 \setlength\itemsep{0em}
\item[(i)] $\Delta_0$ is the set of problems that can be computed in finite time. 
\item[(ii)] $\Delta_1$ is the set of problems with SCI $\leq 1$, where one also has error control., i.e. the algorithm halts on an input parameter $\epsilon$ and the output is no further away than $\epsilon$ from the true solution.  
\item[(iii)] $\Delta_2$ is the set of problems where SCI $\leq 1$, but error control may not be possible. 
\item[(iv)] $\Delta_{m+1}$ is the set of problems where the SCI $\leq m$.
\end{itemize}
This hierarchy shows up in many basic computational problems, and below follow some examples.
\begin{example}[Spectral problems]
Assuming that an algorithm can do arithmetic operations and comparisons of real numbers (complex numbers are treated as a pair of real numbers), then the general computational spectral problem is in $\Delta_4$, but not in $\Delta_3$. The self-adjoint spectral problem is in $\Delta_3$ but not in $\Delta_2$. The compact spectral problem is in $\Delta_2$ but not in $\Delta_1$. The finite-dimensional spectral problem is in $\Delta_1$, but not in $\Delta_0$. 
\end{example}

\begin{example}[Inverse Problems]
Assuming the same type of algorithm as in the previous example and
suppose that 
$b \in l^2(\mathbb{N})$ and $A \in \mathcal{B}_{\mathrm{inv}}(l^2(\mathbb{N}))$ (the set of bounded invertible operators), then solving 
$$
Ax = b
$$
is in $\Delta_3$ and not in $\Delta_2$. If $A$ is self-adjoint, then the problem is still in $\Delta_3$ and not in $\Delta_2$. However, if $A$ is banded then the problem is in $\Delta_2$ but not in $\Delta_1$. Finally, if $A$ is finite-dimensional, then the problem is in $\Delta_0$.
\end{example}

\begin{example}[Optimization]
Assuming the same as above, we may consider a popular problem occurring in infinite-dimensional compressed sensing \cite{BAACHGSCS, AHPRBreaking}. In particular,
given an $A \in \mathcal{B}(l^2(\mathbb{N}))$ and $y \in l^2(\mathbb{N})$ that are feasible, then the optimization problem of finding
\begin{equation*}
x \in \mathop{\mathrm{arg min}}_{\eta \in l^p(\mathbb{N}) } \| \eta \|_{l^1}\ \mbox{ subject to $\|A\eta - y \| \leq \delta$}, \qquad \delta > 0
\end{equation*}
is not in $\Delta_2$, however, it is not known where in the SCI hierarchy this problem is. 
\end{example}

An important result in \cite{SCI, SCI2} is that the SCI hierarchy does not collapse regardless of the axioms on the algorithm. This demonstrates that any theory aiming at establishing the foundations of computational mathematics will have to include the SCI hierarchy.

\bibliography{bib_file_Banach1}

\begin{thebibliography}{10}

\bibitem{BAACHGSCS}
B.~Adcock and A.~C. Hansen.
\newblock Generalized sampling and infinite-dimensional compressed sensing.
\newblock {\em Found. Comput. Math.}, pages 1--61, 2015.

\bibitem{AHPRBreaking}
B.~Adcock, A.~C. Hansen, C.~Poon, and B.~Roman.
\newblock Breaking the coherence barrier: A new theory for compressed sensing.
\newblock {\em arXiv:1302.0561}, 2013.

\bibitem{Arveson_cnum_lin94}
W.~Arveson.
\newblock {$C\sp *$}-algebras and numerical linear algebra.
\newblock {\em J. Funct. Anal.}, 122(2):333--360, 1994.

\bibitem{Arveson_role_of94}
W.~Arveson.
\newblock The role of {$C\sp \ast$}-algebras in infinite-dimensional numerical
  linear algebra.
\newblock In {\em $C\sp \ast$-algebras: 1943--1993 (San Antonio, TX, 1993)},
  volume 167 of {\em Contemp. Math.}, pages 114--129. Amer. Math. Soc.,
  Providence, RI, 1994.

\bibitem{Aupetit}
B.~Aupetit.
\newblock {\em A primer on spectral theory}.
\newblock Universitext. Springer-Verlag, New York, 1991.

\bibitem{Bedos_folner97}
E.~B{\'e}dos.
\newblock On {F}\o lner nets, {S}zeg{\H o}'s theorem and other eigenvalue
  distribution theorems.
\newblock {\em Exposition. Math.}, 15(3):193--228, 1997.

\bibitem{SCI}
J.~Ben-Artzi, A.~C. Hansen, O.~Nevanlinna, and M.~Seidel.
\newblock {Can everything be computed? - On the Solvability Complexity Index
  and Towers of Algorithms}.
\newblock {\em arXiv:1508.03280v1}, 2015.

\bibitem{SCI2}
J.~Ben-Artzi, A.~C. Hansen, O.~Nevanlinna, and M.~Seidel.
\newblock New barriers in complexity theory: On the solvability complexity
  index and the towers of algorithms.
\newblock {\em Comptes Rendus Mathematique}, 353(10):931 -- 936, 2015.

\bibitem{Bender3}
C.~M. Bender, D.~C. Brody, and H.~F. Jones.
\newblock Complex extension of quantum mechanics.
\newblock {\em Phys. Rev. Lett.}, 89(27):270401, 4, 2002.

\bibitem{Bottcher_Pseudo}
A.~B{\"o}ttcher.
\newblock Pseudospectra and singular values of large convolution operators.
\newblock {\em J. Integral Equations Appl.}, 6(3):267--301, 1994.

\bibitem{Bottcher_C*}
A.~B{\"o}ttcher.
\newblock {$C\sp *$}-algebras in numerical analysis.
\newblock {\em Irish Math. Soc. Bull.}, (45):57--133, 2000.

\bibitem{Bottcher_99}
A.~B{\"o}ttcher and B.~Silbermann.
\newblock {\em Introduction to large truncated {T}oeplitz matrices}.
\newblock Universitext. Springer-Verlag, New York, 1999.

\bibitem{Lyonell_2}
L.~Boulton.
\newblock Projection methods for discrete {S}chr\"odinger operators.
\newblock {\em Proc. London Math. Soc. (3)}, 88(2):526--544, 2004.

\bibitem{Lyonell1}
L.~Boulton.
\newblock Non-variational approximation of discrete eigenvalues of self-adjoint
  operators.
\newblock {\em IMA J. Numer. Anal.}, 27(1):102--121, 2007.

\bibitem{Marletta2}
B.~M. Brown and M.~Marletta.
\newblock Spectral inclusion and spectral exactness for singular
  non-self-adjoint {H}amiltonian systems.
\newblock {\em R. Soc. Lond. Proc. Ser. A Math. Phys. Eng. Sci.},
  459(2036):1987--2009, 2003.

\bibitem{Brown}
N.~P. Brown.
\newblock Quasi-diagonality and the finite section method.
\newblock {\em Math. Comp.}, 76(257):339--360 (electronic), 2007.

\bibitem{Iserles}
H.~Brunner, A.~Iserles, and S.~P. N{\o}rsett.
\newblock The spectral problem for a class of highly oscillatory {F}redholm
  integral operators.
\newblock {\em IMA J. Numer. Anal.}, 30(1):108--130, 2010.

\bibitem{Greenbaum1}
J.~V. Burke and A.~Greenbaum.
\newblock Characterizations of the polynomial numerical hull of degree {$k$}.
\newblock {\em Linear Algebra Appl.}, 419(1):37--47, 2006.

\bibitem{Davies_bull}
E.~B. Davies.
\newblock Non-self-adjoint differential operators.
\newblock {\em Bull. London Math. Soc.}, 34(5):513--532, 2002.

\bibitem{Davies_book}
E.~B. Davies.
\newblock {\em Linear operators and their spectra}, volume 106 of {\em
  Cambridge Studies in Advanced Mathematics}.
\newblock Cambridge University Press, Cambridge, 2007.

\bibitem{Zworski}
N.~Dencker, J.~Sj{\"o}strand, and M.~Zworski.
\newblock Pseudospectra of semiclassical (pseudo-) differential operators.
\newblock {\em Comm. Pure Appl. Math.}, 57(3):384--415, 2004.

\bibitem{Doyle_McMullen}
P.~Doyle and C.~McMullen.
\newblock Solving the quintic by iteration.
\newblock {\em Acta Math.}, 163(3-4):151--180, 1989.

\bibitem{Greenbaum2}
A.~Greenbaum.
\newblock Some theoretical results derived from polynomial numerical hulls of
  {J}ordan blocks.
\newblock {\em Electron. Trans. Numer. Anal.}, 18:81--90 (electronic), 2004.

\bibitem{Silbermann}
R.~Hagen, S.~Roch, and B.~Silbermann.
\newblock {\em {$C^*$}-algebras and numerical analysis}, volume 236 of {\em
  Monographs and Textbooks in Pure and Applied Mathematics}.
\newblock Marcel Dekker Inc., New York, 2001.

\bibitem{Hansen1}
A.~C. Hansen.
\newblock On the approximation of spectra of linear operators on {H}ilbert
  spaces.
\newblock {\em J. Funct. Anal.}, 254(8):2092--2126, 2008.

\bibitem{Hansen3}
A.~C. Hansen.
\newblock Infinite-dimensional numerical linear algebra: theory and
  applications.
\newblock {\em R. Soc. Lond. Proc. Ser. A Math. Phys. Eng. Sci.},
  466(2124):3539--3559, 2010.

\bibitem{Hansen2}
A.~C. Hansen.
\newblock On the solvability complexity index, the {$n$}-pseudospectrum and
  approximations of spectra of operators.
\newblock {\em J. Amer. Math. Soc.}, 24(1):81--124, 2011.

\bibitem{Hatano_96}
N.~Hatano and D.~R. Nelson.
\newblock Localization transitions in non-hermitian quantum mechanics.
\newblock {\em Phys. Rev. Lett.}, 77(3):570--573, Jul 1996.

\bibitem{Jacobi}
C.~G.~J. Jacobi.
\newblock \"{U}ber {R}eihenentwicklungen, welche nach den {P}otenzen eines
  gegebenen {P}olynoms fortschreiten, und zu {C}oefficienten {P}olynome eines
  niederen {G}rades haben.
\newblock {\em J. Reine Angew. Math.}, (53):103--126, 1857.

\bibitem{Levitin}
M.~Levitin and E.~Shargorodsky.
\newblock Spectral pollution and second-order relative spectra for self-adjoint
  operators.
\newblock {\em IMA J. Numer. Anal.}, 24(3):393--416, 2004.

\bibitem{McMullen1}
C.~McMullen.
\newblock Families of rational maps and iterative root-finding algorithms.
\newblock {\em Ann. of Math. (2)}, 125(3):467--493, 1987.

\bibitem{McMullen2}
C.~McMullen.
\newblock Braiding of the attractor and the failure of iterative algorithms.
\newblock {\em Invent. Math.}, 91(2):259--272, 1988.

\bibitem{Von_Neumann1}
F.~J. Murray and J.~Von~Neumann.
\newblock On rings of operators.
\newblock {\em Ann. of Math. (2)}, 37(1):116--229, 1936.

\bibitem{Nevanlinna_book}
O.~Nevanlinna.
\newblock {\em Convergence of iterations for linear equations}.
\newblock Lectures in Mathematics ETH Z\"urich. Birkh\"auser Verlag, Basel,
  1993.

\bibitem{Nevanlinna_95}
O.~Nevanlinna.
\newblock Hessenberg matrices in {K}rylov subspaces and the computation of the
  spectrum.
\newblock {\em Numer. Funct. Anal. Optim.}, 16(3-4):443--473, 1995.

\bibitem{Nevanlinna}
O.~Nevanlinna.
\newblock Computing the spectrum and representing the resolvent.
\newblock {\em Numer. Funct. Anal. Optim.}, 30(9-10):1025--1047, 2009.

\bibitem{Olavi_JFA}
O.~Nevanlinna.
\newblock Lemniscates and k-spectral sets.
\newblock {\em Journal of Functional Analysis}, 262(4):1728 -- 1741, 2012.

\bibitem{Nevanlinna_10}
O.~Nevanlinna.
\newblock Multicentric holomorphic calculus.
\newblock {\em Computational Methods and Function Theory}, 12(1):45--65, 2012.

\bibitem{Olavi_Oper}
O.~Nevanlinna.
\newblock Polynomial as a new variable - a {B}anach algebra with functional
  calculus.
\newblock {\em Oper. Matrices}, (to appear).

\bibitem{Seidel}
M.~Seidel.
\newblock On (n,$\epsilon$)-pseudospectra of operators on {B}anach spaces.
\newblock {\em Journal of Functional Analysis}, 262(11):4916 -- 4927, 2012.

\bibitem{Sharg2}
E.~Shargorodsky.
\newblock On the level sets of the resolvent norm of a linear operator.
\newblock {\em Bull. Lond. Math. Soc.}, 40(3):493--504, 2008.

\bibitem{Shub_Smale_JAMS}
M.~Shub and S.~Smale.
\newblock Complexity of {B}\'ezout's theorem. {I}. {G}eometric aspects.
\newblock {\em J. Amer. Math. Soc.}, 6(2):459--501, 1993.

\bibitem{Smale2}
S.~Smale.
\newblock The fundamental theorem of algebra and complexity theory.
\newblock {\em Bull. Amer. Math. Soc. (N.S.)}, 4(1):1--36, 1981.

\bibitem{Smale_Acta_Numerica}
S.~Smale.
\newblock Complexity theory and numerical analysis.
\newblock In {\em Acta numerica, 1997}, volume~6 of {\em Acta Numer.}, pages
  523--551. Cambridge Univ. Press, Cambridge, 1997.

\bibitem{Trefethen_Acta}
L.~N. Trefethen.
\newblock Computation of pseudospectra.
\newblock In {\em Acta numerica, 1999}, volume~8 of {\em Acta Numer.}, pages
  247--295. Cambridge Univ. Press, Cambridge, 1999.

\bibitem{Trefethen_paper}
L.~N. Trefethen and S.~J. Chapman.
\newblock Wave packet pseudomodes of twisted {T}oeplitz matrices.
\newblock {\em Comm. Pure Appl. Math.}, 57(9):1233--1264, 2004.

\bibitem{Trefethen_Embree}
L.~N. Trefethen and M.~Embree.
\newblock {\em Spectra and pseudospectra}.
\newblock Princeton University Press, Princeton, NJ, 2005.

\bibitem{Zworski2}
M.~Zworski.
\newblock Resonances in physics and geometry.
\newblock {\em Notices Amer. Math. Soc.}, 46(3):319--328, 1999.

\end{thebibliography}
\bibliographystyle{abbrv}

\end{document}